\newcommand{\dd}{\mathrm{d}}
\newcommand{\E}{\mathbb{E}}
\newcommand{\1}{\textbf{1}}
\newcommand{\R}{\mathbb{R}}
\newcommand{\vp}{\varphi}
\newcommand{\mc}[1]{\mathcal{{#1}}}
\newtheorem{theorem}{Theorem}
\newtheorem{lemma}[theorem]{Lemma}
\newtheorem{corollary}[theorem]{Corollary}
\newtheorem{proposition}[theorem]{Proposition}
\theoremstyle{remark}
\newtheorem{remark}[theorem]{Remark}
\newtheorem{question}[theorem]{Question}
\begin{document}

\title{Sharp comparison of moments and the log-concave moment problem}

\author{Alexandros Eskenazis}
\address{(A.\!~E.) Mathematics Department\\ Princeton University\\ Fine Hall, Washington Road, Princeton, NJ 08544-1000, USA}
\email{ae3@math.princeton.edu}

\author{Piotr Nayar}
\address{(P.\!~N.) Institute of Mathematics\\ University of Warsaw\\ Banacha 2, 02-097, Warsaw, Poland}
\email{nayar@mimuw.edu.pl}

\author{Tomasz Tkocz}
\address{(T.\!~T.) Department of Mathematical Sciences \\ Carnegie Mellon University\\ Pittsburgh, PA 15213, USA}
\email{ttkocz@math.cmu.edu}

\thanks{This material is partially based upon work supported by the NSF grant DMS-1440140, while
the authors were in residence at the MSRI in Berkeley, California, during the fall semester of 2017. A.\!~E. and T.\!~T. were
also partially supported by the Simons Foundation and P.\!~N. by the ERC Starting Grant CONC-VIA-RIEMANN and by the National Science Centre Poland grant
2015/18/A/ST1/00553..}

\begin{abstract}This article investigates sharp comparison of moments for various classes of random variables appearing in a geometric context. In the first part of our work we find the optimal constants in the Khintchine inequality for random vectors uniformly distributed on the unit ball of the space $\ell_q^n$ for $q\in(2,\infty)$, complementing past works that treated $q\in(0,2]\cup\{\infty\}$. As a byproduct of this result, we prove an extremal property for weighted sums of symmetric uniform distributions among all symmetric unimodal distributions. In the second part we provide a one-to-one correspondence between vectors of moments of symmetric log-concave functions and two simple classes of piecewise log-affine functions. These functions are shown to be the unique extremisers of the $p$-th moment functional, under the constraint of a finite number of other moments being fixed, which is a refinement of the description of extremisers provided by the generalised localisation theorem of Fradelizi and Gu\'edon [Adv.\! Math.\! 204 (2006) no.\! 2, 509--529].
\end{abstract}

\maketitle

{\footnotesize
\noindent {\em 2010 Mathematics Subject Classification.} Primary: 60E15; Secondary: 26D15, 28A25, 52A40.

\noindent {\em Key words.} Khintchine inequality, integral inequality, $\ell_p^n$-ball, unimodal distribution, moment comparison, moment problem, log-concave distribution, log-concave function.}


\section{Introduction}

This paper is devoted to two results concerning moments of log-concave random variables. The first is a sharp Khintchine-type inequality for linear functionals of random vectors uniformly distributed on the unit balls of $\ell_q^n$ for $q\in(2,\infty)$. The second is a precise description of sequences of moments of symmetric log-concave functions on the real line. The approach to both these results is based on the same simple idea which we shall now \mbox{briefly explain.}

Suppose we are given two real random variables $X,Y$ that satisfy $\E|X|^{p_i}=\E|Y|^{p_i}$ for $i=1,\ldots,n$, where $p_1,\ldots,p_n$ are distinct real numbers, and a function $\vp:\R \to \R$ for which we want to prove the inequality $\E \vp(X) \geq \E \vp(Y)$. Let $f_X, f_Y$ be the densities of $X$ and $Y$ respectively. We would like to show that $\int_\R \vp\cdot(f_X-f_Y) \geq 0$. Using the constraints, we see that this integral can be rewritten as
\begin{equation} \label{eq:generalstrategy}
	\int_\R \vp\cdot(f_X-f_Y) = \int_\R \Big( \vp(t)+ \sum_{i=1}^n c_i t^{p_i} \Big)\big(f_X(t)-f_Y(t)\big) \dd t,
\end{equation}
for every $c_1,\ldots,c_n\in\R$. Suppose additionally that $f_X-f_Y$ changes sign in exactly $n$ points $t_1,\ldots, t_n\in\R$. It turns out that it is always possible to choose the parameters $c_1,\ldots,c_n$ in such a way that the other factor $h(t)=\vp(t)+ \sum_{i=1}^n c_i t^{p_i}$ also vanishes in these points. Therefore, if one can prove (under additional assumptions on $\varphi$) that $h$ actually changes sign \emph{only} in $t_1,\ldots, t_n$, then the integrand in \eqref{eq:generalstrategy} has a fixed sign and the desired inequality follows.
  

\subsection{A sharp Khintchine-type inequality}\label{sec:intro-khintch}

Let $X = (X_1,\ldots,X_n)$ be a random vector in $\R^n$. A Khintchine inequality is a comparison of moments of linear forms $S=\sum_{i=1}^n a_iX_i$ of $X$, namely an inequality of the form $\|S\|_p \leq C_{p,q,X}\|S\|_q$, for $p,q>0$, where $\|S\|_r=(\E|S|^r)^{1/r}$ denotes the $r$-th moment of $S$. Here the constant $C_{p,q,X}$ depends only on $p,q$ and the distribution of $X$, but not on the vector $(a_1,\ldots,a_n)$. Since the second moment $\|S\|_2$ has an explicit expression in terms of the coefficients $a_1,\ldots,a_n$, the most commonly used Khintchine inequalities are of the form
\begin{equation}\label{eq:l2lp}\tag{$\star$}
A_{p,X} \Big\| \sum_{i=1}^n a_iX_i \Big\|_2 \leq \Big\|\sum_{i=1}^n a_i X_i\Big\|_p \leq B_{p,X} \Big\|\sum_{i=1}^n a_i X_i\Big\|_2.
\end{equation}
It is of interest to study the best constants $A_{p,X}$ and $B_{p,X}$ such that the above inequality holds for all real numbers $a_1,\ldots,a_n$.
In this setting, the classical Khintchine inequality (see \cite{Khi}) corresponds to a random vector $X$ uniformly distributed on the discrete cube $\{-1,1\}^n$. Then, one of the two sharp constants $A_{p,X}=A_{p,n}$ or $B_{p,X}=B_{p,n}$, depending on the value of $p$, is always equal to 1. To the best of our knowledge, the other optimal constant is known only for some ranges of $p$, namely for $p\geq3$ by the work \cite{Whi} of Whittle (see also \cite{Eat} and \cite{Kom}) and for $p\leq p_0\approx 1.8474$ by the works of Szarek \cite{Sz} and Haagerup \cite{Haa}. The asymptotically sharp constants $A_p = \inf_{n \geq 1} A_{p,n}$ and $B_p = \sup_{n\geq 1} B_{p,n}$ have been determined for all $p>0$ (see \cite{Haa}). We refer to \cite{LO}, \cite{KK}, \cite{BC}, \cite{NO} and \cite{Kon} for Khintchine inequalities for other random vectors.

In this article we consider random vectors $X$ uniformly distributed on the unit ball $B_q^n = \{x \in \R^n: \ |x_1|^q+\ldots+|x_n|^q \leq 1\}$ of the space $\ell_q^n$, where $q>0$. As usual, we denote by $B_\infty^n = [-1,1]^n$ the unit cube. We are interested in the values of the best constants $A_{p,X}=A_{p,q,n}$ and $B_{p,X}=B_{p,q,n}$ such that inequality \eqref{eq:l2lp} holds for all real numbers $a_1,\ldots,a_n$. In \cite{LO}, Lata\l a and Oleszkiewicz determined these constants for all $p \geq 1$ and $q=\infty$, that is, when $X_1,\ldots,X_n$ are i.i.d. random variables uniformly distributed on $[-1,1]$ (see also Section \ref{sec:remarks} for a short proof of their theorem). For $q<\infty$, the question was first raised by Barthe, Gu\'edon, Mendelson and Naor in \cite{BGMN}, who estimated the values of the optimal constants up to universal multiplicative factors for every $p, q \geq 1$. In the recent work \cite{ENT}, we found the sharp values of $A_{p,q,n}$ and $B_{p,q,n}$ for all $q \in (0,2]$ and $p > -1$ via a reduction to moments of \emph{Gaussian mixtures}, yet this approach fails for $q>2$. The first goal of this paper is to address the problem for the remaining range $q\in(2,\infty)$, when additionally we shall assume that $p\geq1$, thus answering Question 6 of \cite{ENT}.

As observed in \cite[Lemma~6]{BGMN}, if $X=(X_1,\ldots,X_n)$ is uniformly distributed on $B_q^n$ for some $q > 0$, then for every $p > -1$ and real numbers $a_1,\ldots,a_n$ we have
\begin{equation}\label{eq:Bpnmomentident}
\Big\|\sum_{i=1}^n a_iX_i\Big\|_p = \beta_{p,q,n}\Big\|\sum_{i=1}^n a_iY_i\Big\|_p,
\end{equation}
where $Y_1,\ldots,Y_n$ are i.i.d. random variables with density proportional to $e^{-|x|^q}$ and $\beta_{p,q,n}$ is a positive constant, given explicitly by
\begin{equation}\label{eq:beta}
\beta_{p,q,n} = \frac{\|X_1\|_p}{\|Y_1\|_p} = \left(\frac{\Gamma(n/q+1)}{\Gamma((n+p)/q+1)}\right)^{1/p}.
\end{equation}
This identity is a crucial observation which reduces finding the optimal constants in Khintchine's inequality for $X$ whose coordinates are dependent to $Y = (Y_1,\ldots,Y_n)$, which has i.i.d. components. Therefore, we restrict our attention to the latter case.

\begin{theorem}\label{thm:khintchexp-edge}
Fix $q \in [2,\infty]$ and $n \geq 1$. If $Y_1,\ldots,Y_n$ are i.i.d. random variables with density functions proportional to $e^{-|x|^q}$, then for every unit vector $(a_1,\ldots,a_n)$ and $p\geq2$ we have
\begin{equation}\label{eq:khintchexp-edge}
\|Y_1\|_p \leq \Big\| \sum_{i=1}^n a_i Y_i\Big\|_p,
\end{equation}
whereas for $p\in[1,2]$ the inequality is reversed. This is clearly sharp.
\end{theorem}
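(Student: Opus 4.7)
The plan is to apply the sign-change strategy of \eqref{eq:generalstrategy} with $X = Y_1$ and $Y = S := \sum_{i=1}^n a_i Y_i$. Since $q \geq 2$, the density $x \mapsto e^{-|x|^q}$ is symmetric and log-concave, and both properties pass to $f_S$ through scaling and convolution. The unit-vector condition forces $\E Y_1^2 = \E S^2$, so $h := f_{Y_1} - f_S$ satisfies
\[
\int_\R h(t)\,\dd t \;=\; \int_\R t^2 h(t)\,\dd t \;=\; 0, \qquad \int_\R t^{2k+1} h(t)\,\dd t = 0 \ \text{ for all } k\ge 0,
\]
the last family by symmetry.

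The structural heart of the argument is the claim that, unless $h \equiv 0$, the function $h$ has exactly two sign changes on $(0,\infty)$, at some points $0 < s_1 < s_2$, with a definite sign on each of the three subintervals $(0,s_1)$, $(s_1,s_2)$, $(s_2,\infty)$. Granting this, one chooses the unique pair $(c_0, c_2)$ such that $g(t) := |t|^p - c_0 - c_2 t^2$ vanishes at $s_1, s_2$. On $(0,\infty)$ one has $g'(t) = t(pt^{p-2} - 2c_2)$, which has at most one strictly positive zero; therefore $g$ has exactly two positive roots, namely $s_1$ and $s_2$, with a constant sign on each of the three subintervals above. A sign comparison at the origin — using that $f_S(0) \ge f_{Y_1}(0)$ whenever $S$ is a nontrivial combination, a routine consequence of the matching variance together with log-concavity — shows that the sign patterns of $g$ and $h$ are opposite for $p \ge 2$ and identical for $p \in [1, 2]$. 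Invoking the moment cancellations,
\[
\|Y_1\|_p^p - \|S\|_p^p \;=\; \int_\R |t|^p h(t)\,\dd t \;=\; \int_\R g(t) h(t)\,\dd t,
\]
we conclude that the integrand has a definite sign, yielding the required inequality in both regimes.

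The main obstacle is the structural claim on the sign changes of $h$. Informally, two symmetric log-concave densities with equal mass and variance cannot disagree in an oscillatory manner on the positive half-line; a hypothetical third sign change would conflict with the log-concavity of both $f_{Y_1}$ and $f_S$ under the two linear moment constraints. We plan to establish this either (i) directly, via a careful zero-count exploiting the concavity of $\log f_{Y_1}$ and $\log f_S$, or (ii) by appealing to the two-moment version of the generalised localisation principle of Fradelizi and Gu\'edon [\emph{loc.~cit.}], the framework which is refined in the second part of the paper. A simpler secondary point is the sign check at the origin and at infinity: for $a \ne e_1$ a direct large-deviation computation shows that $S$ has strictly heavier tails than $Y_1$, pinning down the sign of $h$ at infinity and completing the identification of its pattern.
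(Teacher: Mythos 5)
The proposal hinges on a structural claim — that $h = f_{Y_1} - f_S$ has at most two sign changes on $(0,\infty)$ — which is left unproved, and neither of your two proposed routes to it works. The abstract statement ``two symmetric log-concave densities with equal mass and variance cross at most twice on $(0,\infty)$'' is false: perturbing the potential $x^2$ by a small bounded-second-derivative oscillation (e.g.\ $\epsilon\cos x$) and renormalising mass and variance produces two symmetric log-concave densities with matched constraints whose difference has arbitrarily many sign changes. So log-concavity plus two moment constraints alone will not deliver the count. Your alternative route through Fradelizi--Gu\'edon localisation also does not apply: localisation characterises the extremisers of a convex functional over a constrained family, whereas here $f_{Y_1}$ and $f_S$ are two fixed densities and neither is an extremiser of anything; localisation says nothing about the sign pattern of their difference. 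Finally, the assertion that $f_S(0)\ge f_{Y_1}(0)$ is ``a routine consequence of the matching variance together with log-concavity'' is not correct as stated: among symmetric log-concave densities with the same variance the value at the origin can go either way (compare uniform, Gaussian, and Laplace all with variance $1$), so this too would need an argument specific to the $e^{-|x|^q}$ family.

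The paper avoids the direct comparison of $f_{Y_1}$ with $f_S$ precisely because that comparison is hard. Instead it interpolates in the exponent: Theorem \ref{thm:monot_single} shows that $\E\big|\sum a_i Y^{(q_i)}_{i,p}\big|^p$ is coordinatewise monotone in $(q_1,\dots,q_n)$, so that Theorem \ref{thm:khintchexp-edge} follows by sliding each $q_i$ down to $2$ and using rotational invariance of the Gaussian. The one-variable comparison that drives this (Lemma \ref{lm:singlecomparisonp}) is between two densities of the very explicit form $b_s e^{-a_s|x|^s}$ for two values $s\in\{q,r\}$; for these the at-most-two-crossings property is genuinely easy, since $x\mapsto\ln\big(\phi_q(x^{1/q})/\phi_r(x^{1/q})\big)$ is linear minus concave, hence convex. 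The sum $S$ is handled not by computing $f_S$ but by conditioning on the other coordinates and checking that the resulting test function $h$ in Lemma \ref{lm:singlecomparisonp} is convex or concave as needed (Lemma \ref{lm:conv1/p}). In short: you have identified the right two-point interpolation idea from the introduction, but applied it to the wrong pair of densities; the sign-change lemma you need is out of reach for $f_{Y_1}$ versus $f_S$, and the paper's decomposition exists exactly to replace that hard comparison by a chain of tractable ones.
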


Denote by $\gamma_p = \sqrt{2} \Big(\frac{\Gamma (\frac{p+1}{2})}{\sqrt{\pi}} \Big)^{1/p}$ the $p$-th moment of a standard Gaussian random variable.

\begin{theorem}\label{thm:khintchexp-gauss}
Fix $q \in [2,\infty]$. If $Y_1, Y_2, \ldots$ are i.i.d. random variables with density functions proportional to $e^{-|x|^q}$, then for every $n\geq1$, real numbers $a_1,\ldots,a_n$ and $p\geq2$ we have
\begin{equation}\label{eq:khintchexp-gauss}
\Big\| \sum_{i=1}^n a_iY_i \Big\|_p \leq \gamma_p \Big\|\sum_{i=1}^n a_iY_i\Big\|_2,
\end{equation}
whereas for $p\in[1,2]$ the inequality is reversed. The above constant is optimal.
\end{theorem}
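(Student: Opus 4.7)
The plan is to prove \eqref{eq:khintchexp-gauss} by tensorization, reducing to a single-variable comparison handled by the general strategy \eqref{eq:generalstrategy}. Let $Z_1,Z_2,\dots$ be i.i.d.\ standard Gaussians independent of the $Y_i$, and set $\sigma_1=\|Y_1\|_2$. Since $\sum_i a_i\sigma_1 Z_i$ is Gaussian with variance $\sigma_1^2\sum_i a_i^2=\|\sum_i a_iY_i\|_2^2$, the target inequality is equivalent to
\[
\Big\|\sum_{i=1}^n a_iY_i\Big\|_p \leq \Big\|\sum_{i=1}^n a_i\sigma_1 Z_i\Big\|_p\qquad (p\geq 2),
\]
reversed for $p\in[1,2]$. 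I would establish this by replacing the $Y_i$ by $\sigma_1 Z_i$ one coordinate at a time and showing that each swap monotonically changes the $p$-norm.

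The single-variable base case $\|Y_1\|_p\leq\gamma_p\sigma_1$ is obtained by applying \eqref{eq:generalstrategy} with $X=Y_1$, $Y=\sigma_1 Z$, $\vp(t)=|t|^p$, and constraints $p_1=0$, $p_2=2$ (mass and variance match). The log-density difference equals $g(t)+C$ with $g(t)=-|t|^q+\tfrac{t^2}{2\sigma_1^2}$ and $C=\log(c_q\sigma_1\sqrt{2\pi})$; for $q>2$ one checks that $g$ is convex at the origin and concave at infinity with a unique positive maximum $g(t^*)$, while $C<0$ and $C+g(t^*)>0$. Thus $f_{\sigma_1 Z}-f_{Y_1}$ is even with exactly two sign changes on $(0,\infty)$, in the pattern $+,-,+$ (the boundary case $q=\infty$ is handled separately via the compact support of $Y_1$, which produces the second crossing as a jump at $|t|=1$). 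Choosing $c_0,c_2$ so that $h(t)=|t|^p+c_0+c_2t^2$ vanishes at the two positive crossings $t_1<t_2$ gives $c_0>0$ and $c_2<0$; analysis of $h'(t)=t(pt^{p-2}+2c_2)$ shows $h$ has a local maximum at $0$ and a single local minimum on $(0,\infty)$, hence the matching $+,-,+$ sign pattern. Consequently,
\[
\int_\R |t|^p (f_{\sigma_1 Z}-f_{Y_1})\,\dd t=\int_\R h(t)\,(f_{\sigma_1 Z}-f_{Y_1})\,\dd t\geq 0.
\]
The case $p\in[1,2]$ is symmetric, producing $h$ of pattern $-,+,-$ and the reverse inequality.

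The tensorization step requires that for every symmetric random variable $W$ independent of $Y_1$ and $Z$, and every $a\in\R$,
\[
\E|W+aY_1|^p \leq \E|W+a\sigma_1 Z|^p \qquad (p\geq 2).
\]
Using the symmetries of $W$ and of $Y_1$, one rewrites $\E|W+aY_1|^p=\tfrac{1}{2}\E_W\E_{Y_1}\tilde\psi_W(aY_1)$ with $\tilde\psi_w(y)=|w+y|^p+|w-y|^p$, an even convex function of $y$. It then suffices to prove $\E\tilde\psi_w(aY_1)\leq\E\tilde\psi_w(a\sigma_1 Z)$ for each fixed $w\in\R$; a second application of \eqref{eq:generalstrategy} (the density difference $f_{a\sigma_1 Z}-f_{aY_1}$ is a rescaling of the base case and still has the $+,-,+$ pattern) reduces this to verifying that $H_w(y)=\tilde\psi_w(y)+c_0+c_2 y^2$ has the matching sign pattern for the appropriate $c_0,c_2$. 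This is straightforward when $p\geq 3$, as $\tilde\psi_w''$ is then monotone on $[0,\infty)$; for $p\in(2,3)$ the function $\tilde\psi_w''(y)=p(p-1)[|w+y|^{p-2}+|w-y|^{p-2}]$ has a local minimum at $|y|=|w|$, and one must use the explicit form of the linear solution for $c_0,c_2$ to argue that $H_w$ still has precisely two positive zeros with the correct pattern. This verification is the main technical obstacle in the plan. Iterating the single-coordinate swap yields \eqref{eq:khintchexp-gauss}.

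Sharpness of $\gamma_p$ follows from the central limit theorem applied with $a_i=n^{-1/2}$: the sum $n^{-1/2}\sum_{i=1}^n Y_i$ converges in distribution to $\sigma_1 Z$, and the sub-Gaussian tails of the $Y_i$ (guaranteed by $q\geq 2$) provide uniform integrability, so $\|n^{-1/2}\sum_{i=1}^n Y_i\|_p\to\sigma_1\gamma_p$ as $n\to\infty$, matching the right-hand side of \eqref{eq:khintchexp-gauss}.
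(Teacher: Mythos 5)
Your overall plan — swap one coordinate at a time and control the resulting comparison by the sign‑pattern argument of \eqref{eq:generalstrategy} — is the same spirit as the paper, and your base case and sharpness argument are fine. But the tensorization step has a genuine gap that you yourself flag, and it is not a technicality that can be patched by a cleverer choice of $c_0,c_2$: conditioning on the exact value $W=w$ of the complementary sum gives a test function whose convexity in the right variable fails precisely on $p\in(2,3)$. Concretely, after the substitution $x=y^2$ that \eqref{eq:generalstrategy} requires (the densities are matched in mass and second moment, so $c_2$ multiplies $y^2$, not $y^q$), you need $\tilde\psi_w(\sqrt{x})=|w+\sqrt{x}|^p+|w-\sqrt{x}|^p$ to be convex in $x$, and this is equivalent to $\tilde\psi_w''(y)=p(p-1)\bigl[|w+y|^{p-2}+|w-y|^{p-2}\bigr]$ being nondecreasing on $[0,\infty)$. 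Differentiating once more, the sign on $(0,|w|)$ is governed by $(w+y)^{p-3}-(w-y)^{p-3}$, which is negative for $p\in(2,3)$, so $\tilde\psi_w''$ has a strict local minimum at $y=|w|$ and $\tilde\psi_w(\sqrt{x})$ is not convex there. Without that convexity, $H_w(y)=\tilde\psi_w(y)+c_0+c_2y^2$ can have more than two sign changes on $(0,\infty)$, and the integral $\int H_w(f_{a\sigma_1 Z}-f_{aY_1})$ has no definite sign.

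What the paper does differently, and what buys it the full range $p\geq 2$, is to exploit \emph{unimodality} rather than symmetry alone. Since the complementary sum $S$ is a symmetric unimodal random variable (Lemma~\ref{lm:unimodsum}), it can be written as $S=RU$ with $R>0$ independent of $U$ uniform on $[-1,1]$ (Lemma~\ref{lm:unimodmix}). Conditioning only on $R$, and averaging over $U$, turns the test function into
$h(x)=\E_R\Bigl[\tfrac12\int_{-1}^1|a_1\sqrt{x}+Ru|^p\,\dd u\Bigr]$,
a mixture of rescaled copies of $h_2(x)=\int_{-1}^1|\sqrt x+u|^p\,\dd u$. In the derivative computation for $h_2$, the fundamental theorem of calculus replaces the sum $|y+1|^p+|y-1|^p$ by the \emph{difference} $|y+1|^p-|y-1|^p$, whose second derivative $p(p-1)\bigl[(y+1)^{p-2}-|y-1|^{p-2}\bigr]$ is nonnegative for all $p\geq2$ (since $y+1\geq|y-1|$). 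This is Lemma~\ref{lm:conv1/p}\emph{(ii)}, and it is exactly the ingredient that closes the gap you identified; the convexity then feeds into Lemma~\ref{lm:singlecomparisonp} with $p=2$, which replaces your ad hoc sign‑pattern verification. The paper's Remark after Theorem~\ref{thm:monot_single} explicitly notes that unimodality is essential at this stage. So the fix is to condition on $R$ and average against the uniform leg, not to condition on the full value of the complementary sum.
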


Combining Theorems \ref{thm:khintchexp-edge} and \ref{thm:khintchexp-gauss} with the crucial identity \eqref{eq:Bpnmomentident}, we get the following consequence for random vectors uniformly distributed on $B_q^n$.

\begin{corollary}\label{cor:Bpnfixedn}
Fix $q\in[2,\infty]$ and $n\geq1$. If $X=(X_1,\ldots,X_n)$ is a random vector uniformly distributed on $B_q^n$, then for every real numbers $a_1,\ldots,a_n$ and $p\geq2$ we have
\begin{equation}\label{eq:khintchineagain}
A_{p,q,n} \Big\| \sum_{i=1}^n a_iX_i \Big\|_2 \leq \Big\|\sum_{i=1}^n a_i X_i\Big\|_p \leq B_{p,q,n} \Big\|\sum_{i=1}^n a_i X_i\Big\|_2,
\end{equation}
where
\begin{equation}\label{eq:Bpnfixedn}
A_{p,q,n} = \begin{cases}\frac{\beta_{p,q,n}}{\beta_{2,q,n}}\gamma_p, & p\in[1,2) \\ \frac{\|X_1\|_p}{\|X_1\|_2}, & p\in[2,\infty) \end{cases} \ \ \ \mbox{and} \ \ \ B_{p,q,n}= \begin{cases} \frac{\|X_1\|_p}{\|X_1\|_2}, & p\in[1,2) \\ \frac{\beta_{p,q,n}}{\beta_{2,q,n}}\gamma_p, & p\in[2,\infty) \end{cases}.
\end{equation}
This value of $A_{p,q,n}$ is sharp for $p \in [2,\infty)$ and of $B_{p,q,n}$ for $p \in [1,2)$.
\end{corollary}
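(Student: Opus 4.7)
The strategy is to push the problem from the uniform distribution on $B_q^n$, whose coordinates are dependent, to the i.i.d.\ product measure with one-dimensional density proportional to $e^{-|x|^q}$, where the two preceding theorems apply directly. Concretely, applying the identity \eqref{eq:Bpnmomentident} at exponents $p$ and $2$ yields
\[
\frac{\big\|\sum_{i=1}^n a_i X_i\big\|_p}{\big\|\sum_{i=1}^n a_i X_i\big\|_2} \;=\; \frac{\beta_{p,q,n}}{\beta_{2,q,n}}\cdot\frac{\big\|\sum_{i=1}^n a_i Y_i\big\|_p}{\big\|\sum_{i=1}^n a_i Y_i\big\|_2},
\]
so the whole question reduces to bounding the right-hand $Y$-ratio. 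Since the $Y_i$ are i.i.d.\ and symmetric, one has $\|\sum_i a_iY_i\|_2 = \|a\|_2\|Y_1\|_2$, and the ratio is scale-invariant in $a$, so I may assume $(a_1,\ldots,a_n)$ is a unit vector.

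For $p\ge 2$, Theorem \ref{thm:khintchexp-edge} then furnishes the lower bound $\|Y_1\|_p/\|Y_1\|_2$ on the $Y$-ratio (saturated by $a=e_1$), while Theorem \ref{thm:khintchexp-gauss} furnishes the sharp upper bound $\gamma_p$; for $p\in[1,2)$ both inequalities reverse. Transferring through the prefactor $\beta_{p,q,n}/\beta_{2,q,n}$ and recalling from \eqref{eq:beta} that $\beta_{r,q,n}=\|X_1\|_r/\|Y_1\|_r$, the ``edge'' quantity $(\beta_{p,q,n}/\beta_{2,q,n})\cdot\|Y_1\|_p/\|Y_1\|_2$ telescopes to $\|X_1\|_p/\|X_1\|_2$, exactly reproducing the two formulas in \eqref{eq:Bpnfixedn}.

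For the sharpness claim, I would note that the choice $a = e_1$ reduces $\sum_i a_i X_i$ to $X_1$ itself and so realises the ratio $\|X_1\|_p/\|X_1\|_2$, saturating the bound in the corresponding direction; this gives sharpness of $A_{p,q,n}$ when $p\ge 2$ and of $B_{p,q,n}$ when $p\in[1,2)$, as claimed in the statement. There is no substantive obstacle here: once Theorems \ref{thm:khintchexp-edge}, \ref{thm:khintchexp-gauss} and the identity \eqref{eq:Bpnmomentident} are in hand, the corollary is a bookkeeping exercise, and the only care required is to correctly track the reversal of inequalities across the threshold $p=2$ and to verify the algebraic simplification $(\beta_{p,q,n}/\beta_{2,q,n})\cdot\|Y_1\|_p/\|Y_1\|_2 = \|X_1\|_p/\|X_1\|_2$, which is immediate from the definition of $\beta_{r,q,n}$.
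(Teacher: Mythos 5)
Your proposal is correct and takes essentially the same route as the paper: apply the identity \eqref{eq:Bpnmomentident} at exponents $p$ and $2$ to reduce to the i.i.d.\ $Y_i$, invoke Theorem \ref{thm:khintchexp-edge} for the ``edge'' bound and Theorem \ref{thm:khintchexp-gauss} for the Gaussian bound, and note that $a=e_1$ saturates the edge side. The telescoping $(\beta_{p,q,n}/\beta_{2,q,n})\cdot\|Y_1\|_p/\|Y_1\|_2 = \|X_1\|_p/\|X_1\|_2$ that you verify is exactly the algebraic step the paper performs implicitly via homogeneity.
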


The infimal (respectively supremal) values of these constants $A_p$ (resp. $B_p$) over $n\geq1$ provide the answer to Question 6 of \cite{ENT}.

\begin{corollary}\label{cor:khintchBpn}
Fix $q \in [2,\infty]$. If $n\geq1$ and $X=(X_1,\ldots,X_n)$ is a random vector uniformly distributed on $B_q^n$, then for every real numbers $a_1,\ldots,a_n$ and $p\geq1$ we have
\begin{equation}\label{eq:khintchineagain_asymp}
A_p \Big\| \sum_{i=1}^n a_iX_i \Big\|_2 \leq \Big\|\sum_{i=1}^n a_i X_i\Big\|_p \leq B_p \Big\|\sum_{i=1}^n a_i X_i\Big\|_2,
\end{equation}
where
\begin{equation}\label{eq:khintchBpnconstants}
A_p = \begin{cases}\gamma_p, & p\in[1,2) \\ \frac{3^{1/2}}{(p+1)^{1/p}}, & p\in[2,\infty) \end{cases} \ \ \ \mbox{and} \ \ \ B_p= \begin{cases} \frac{3^{1/2}}{(p+1)^{1/p}}, & p\in[1,2) \\ \gamma_p, & p\in[2,\infty) \end{cases}
\end{equation}
The above constants are sharp.
\end{corollary}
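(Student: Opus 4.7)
The strategy is to extract $A_p$ and $B_p$ by taking the infimum and supremum over $n\ge1$ (and $q\in[2,\infty]$) of the finite-dimensional constants $A_{p,q,n}$ and $B_{p,q,n}$ from Corollary~\ref{cor:Bpnfixedn}. Those constants come in two shapes: $(\beta_{p,q,n}/\beta_{2,q,n})\gamma_p$, which will yield the $\gamma_p$-type values of $A_p$ and $B_p$, and $\|X_1\|_p/\|X_1\|_2$, which will yield the $\sqrt 3/(p+1)^{1/p}$-type values; I would handle each shape separately.

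For the first shape, the plan is to prove that $\beta_{p,q,n}/\beta_{2,q,n}\le1$ when $p\ge2$ (with the reverse inequality for $p\in[1,2]$) and that this ratio tends to $1$ as $n\to\infty$. Taking logarithms in \eqref{eq:beta}, the inequality reduces to
\[
\log\Gamma\!\Big(\tfrac{n+2}{q}+1\Big)\le\tfrac{2}{p}\log\Gamma\!\Big(\tfrac{n+p}{q}+1\Big)+\Big(1-\tfrac{2}{p}\Big)\log\Gamma\!\Big(\tfrac{n}{q}+1\Big),
\]
which is precisely the convexity of $x\mapsto\log\Gamma(x+1)$ applied at the triple $n/q,(n+p)/q,(n+2)/q$, using the identity $(n+2)/q=\tfrac{2}{p}\cdot(n+p)/q+(1-\tfrac{2}{p})\cdot n/q$; for $p\ge2$ this is a convex combination, while for $p\in[1,2]$ the weight $1-2/p$ becomes negative and the inequality flips. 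Convergence to $1$ is immediate from Stirling's asymptotic $\Gamma(y+a)/\Gamma(y)\sim y^{a}$ applied with $y=n/q$.

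For the second shape, I would note that $X_1$ has density proportional to $(1-|t|^q)_+^{(n-1)/q}$, which is symmetric and log-concave on $\R$ for $q\ge1$ (since $\log(1-|t|^q)$ is concave on $[-1,1]$ in this range). I would then invoke the classical extremal moment inequality for symmetric log-concave random variables on the line: for any such $Z$, $\|Z\|_p/\|Z\|_2\ge\sqrt 3/(p+1)^{1/p}$ when $p\ge2$, with equality when $Z$ is uniform on a symmetric interval, and the reverse inequality for $p\in[1,2]$. Because $B_q^1=[-1,1]$ for every $q$, the case $n=1$ realises $X_1$ as uniform on $[-1,1]$, attaining the extremal value $\sqrt 3/(p+1)^{1/p}$ and establishing sharpness at the uniform end; sharpness of the $\gamma_p$ constants is then attained by sending $n\to\infty$ and using the first step. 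The main obstacle is the classical log-concave moment inequality just cited, which itself relates to the moment-problem machinery developed in the second half of this paper; the remaining steps are a direct convexity-plus-Stirling calculation combined with Corollary~\ref{cor:Bpnfixedn}.
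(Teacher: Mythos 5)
Your proposal is correct but organizes the argument quite differently from the paper. The paper's proof hinges on a single monotonicity claim: that $\|X_1^{(n)}\|_p/\|X_1^{(n)}\|_2$ is nondecreasing in $n$, which it establishes by showing the densities $f_n$ and $f_{n+1}$ interlace with sign pattern $(-,+,-)$ (equivalently $M_n<M_{n+1}$, reduced to strict log-convexity of $\Gamma$); since the crucial identity \eqref{eq:Bpnmomentident} makes $\|X_1^{(n)}\|_p/\|X_1^{(n)}\|_2$ proportional to $\beta_{p,q,n}/\beta_{2,q,n}$, this one claim simultaneously identifies $A_p$ as the value at $n=1$ and $B_p$ as the limit as $n\to\infty$. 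You instead treat the two shapes of constant separately: you bound the ratio $\beta_{p,q,n}/\beta_{2,q,n}$ by $1$ directly through Jensen applied to the convexity of $\log\Gamma(x+1)$ (a correct computation, and the case $p\in[1,2)$ flips precisely because the affine weight $1-2/p$ becomes negative), and you bound $\|X_1\|_p/\|X_1\|_2$ from below by the extremal moment inequality for symmetric log-concave random variables against the uniform. Both routes arrive at the same constants. The chief trade-off: the paper's argument is fully self-contained, while yours modularizes and cleanly isolates the two ingredients, at the price of invoking the log-concave extremal inequality as an external fact. That fact is indeed classical (and, to dispel any worry of circularity, it follows by the same interlacing technique as Lemma~\ref{lm:singlecomparisonp}: the difference between a symmetric log-concave density and the uniform density with matching second moment has sign pattern $(+,-,+)$ on $(0,\infty)$, after which one matches $t^p-\alpha t^2-\beta$ to the two sign changes). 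One small point of care on sharpness of $B_p=\gamma_p$: since $B_{p,q,n}$ is not claimed to be the optimal constant for fixed $n$ when $p\geq2$, the statement that $\beta_{p,q,n}/\beta_{2,q,n}\to1$ alone does not yield sharpness; one should invoke the identity \eqref{eq:Bpnmomentident} together with the CLT applied to the $Y_i$'s (as in the proof of Theorem~\ref{thm:khintchexp-gauss}, with $a_i=1/\sqrt n$), which the paper handles by appealing to the sharpness of Theorem~\ref{thm:khintchexp-gauss}.
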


It will be evident from the proof of Corollary \ref{cor:khintchBpn} that the dimension-dependent constants \eqref{eq:Bpnfixedn} improve upon the asymptotically sharp constants given in \eqref{eq:khintchBpnconstants}.

\begin{question}
Fix $q \in (2,\infty)$, $n\geq1$ and let $X=(X_1,\ldots,X_n)$ be a random vector uniformly distributed on $B_q^n$. For $p\geq2$ (respectively $p\in[1,2]$), what are the optimal values of $B_{p,q,n}$ (resp. $A_{p,q,n}$) in \eqref{eq:khintchineagain}? More ambitiously, which unit vectors $(a_1,\ldots,a_n)$ maximise (resp. minimise) the moments $\big\|\sum_{i=1}^n a_iX_i\big\|_p$?
\end{question}

Our arguments rely on the convexity of certain functions and work in fact for the whole range $q > 0$. However, when $p < 1$ those functions are no longer convex. The technique developed in \cite{ENT} for the range $q \in (0,2]$ has the advantage of covering all $p > -1$. It remains an open problem to understand the optimal constants for $q > 2$ and $p \in (-1,1)$.

\subsection{An extremal property of symmetric uniform distributions}
Before proceeding to the second main part of the present article, we mention an extremal property of symmetric uniform random variables which was motivated by a similar property of independent symmetric random signs $\varepsilon_1, \varepsilon_2, \ldots$. In \cite{FHJSZ} and independently in \cite{Pin}, the authors showed that an Orlicz function $\Phi:\R\to\R$ of class $C^2$ satisfies the inequality
\begin{equation}
\E \Phi\Big(\sum_{i=1}^n X_i\Big) \geq \E\Phi\Big(\sum_{i=1}^n \sigma_i \varepsilon_i\Big),
\end{equation}
for every symmetric independent random variables $X_1,X_2,\ldots$ and real numbers $\sigma_1,\sigma_2,\ldots$ such that $\sigma_i^2 = \E X_i^2$ if and only if $\Phi''$ is convex on $\R$. This result, when applied to $\Phi(x) = |x|^p$ and $X_i$ being standard Gaussian random variables allows one to derive the optimal constants in the classical Khintchine inequality for $p \geq 3$. For $p \in (0,3)$ all available proofs (see \cite{Haa}, \cite{NP}, \cite{Mor}) are subtle and more technical. We obtain the following analogue of the above theorem for symmetric unimodal random variables, i.e. continuous random variables whose densities are even and nonincreasing on $[0,\infty)$.

\begin{theorem}\label{thm:Phi}
Let $U_1,U_2,\ldots$ be independent random variables uniformly distributed on $[-\sqrt{3},\sqrt{3}]$, thus having $\E U_i^2=1$. An even function $\Phi:\R\to\R$ of class $C^3$ satisfies
\begin{equation}\label{eq:Phi}
\E \Phi\Big(\sum_{i=1}^n X_i\Big) \geq \E\Phi\Big(\sum_{i=1}^n \sigma_i U_i\Big),
\end{equation}
for every symmetric unimodal independent random variables $X_1,X_2,\ldots$ and real numbers $\sigma_1,\sigma_2,\ldots$, such that $\sigma_i^2 = \E X_i^2$, if and only if $\Phi'''(x) \geq 0$ for every $x \geq 0$. Moreover, the reverse inequality holds if and only if $\Phi'''(x) \leq 0$ for every $x \geq 0$.
\end{theorem}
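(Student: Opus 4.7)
The plan is to exploit the Khintchine representation of symmetric unimodal distributions as products $X_i \stackrel{d}{=} R_i V_i$ with $R_i \geq 0$ and $V_i \sim \mathrm{Unif}[-1,1]$ mutually independent. Since $\E R_i^2 = 3\sigma_i^2$ and $\sigma_i U_i \stackrel{d}{=} \sqrt{3}\sigma_i V_i'$, setting
\[
g(r_1,\ldots,r_n) := \E\,\Phi\Big(\sum_{i=1}^n r_i V_i\Big), \qquad V_1,\ldots,V_n \text{ i.i.d.\ uniform on } [-1,1],
\]
the inequality \eqref{eq:Phi} becomes $\E_R\, g(R_1,\ldots,R_n) \geq g(\sqrt{\E R_1^2},\ldots,\sqrt{\E R_n^2})$. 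By iterated Jensen's inequality (applied to each coordinate $R_i$ in turn while conditioning on the others), it suffices to show that for every $i$ and all $r_j \geq 0$ ($j \neq i$), the map $s \mapsto g(r_1,\ldots,r_{i-1},\sqrt{s},r_{i+1},\ldots,r_n)$ is convex on $[0,\infty)$.

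Fix $i=1$ and let $Z := \sum_{j\geq 2} r_j V_j$. Using evenness of $\Phi$ together with $Z \stackrel{d}{=} -Z$, one rewrites
\[
g(r_1,r_2,\ldots,r_n) = \int_0^1 \Psi(u r_1)\,\dd u, \qquad \Psi(w) := \E\,\Phi(w+Z),
\]
where $\Psi$ is even and of class $C^3$. A direct calculation (differentiating under the integral and using $\Psi'(0)=0$ to exchange orders of integration) gives
\[
\frac{\dd^2}{\dd s^2}\int_0^1 \Psi(u\sqrt{s})\,\dd u = \frac{1}{8 s^{5/2}}\int_0^{\sqrt s} t(s-t^2)\,\Psi'''(t)\,\dd t,
\]
and since $t(s-t^2) \geq 0$ on $[0,\sqrt{s}]$, the required convexity is implied by $\Psi'''(t) \geq 0$ for every $t \geq 0$.

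The key auxiliary lemma, proved by induction on $m$, states that for every $w \geq 0$, $r_1,\ldots,r_m \geq 0$ and independent $V_1,\ldots,V_m \sim \mathrm{Unif}[-1,1]$, one has $\E\,\Phi'''(w + \sum_{j=1}^m r_j V_j) \geq 0$. The base case $m=0$ is exactly the hypothesis. For the inductive step, split the sum as $r_1 V_1 + W$ (with $W$ the remaining sum of $m-1$ terms) and integrate out $V_1$ to get
\[
\E\,\Phi'''(w + r_1 V_1 + W) = \frac{\phi(w+r_1)-\phi(w-r_1)}{2 r_1}, \qquad \phi(y) := \E\,\Phi''(y+W).
\]
Applying the inductive hypothesis to $\phi'(y) = \E\,\Phi'''(y+W)$ shows $\phi$ is nondecreasing on $[0,\infty)$, while symmetry of $W$ makes $\phi$ even; a short case split on whether $w \geq r_1$ or $w < r_1$ then yields $\phi(w+r_1) \geq \phi(w-r_1)$ for every $w \geq 0$. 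Applying the lemma to $Z = \sum_{j\geq 2} r_j V_j$ produces $\Psi''' \geq 0$ on $[0,\infty)$ and completes the sufficiency.

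For necessity, I specialize \eqref{eq:Phi} to $n=2$ with $X_1 = r_1 V_1$ uniform on $[-r_1,r_1]$ (so that $\sigma_1 U_1 \stackrel{d}{=} X_1$ and the first summand contributes identically to both sides) and $X_2 = R_2 V_2$, where $R_2 \in \{r_2-\delta, r_2+\delta\}$ is taken equiprobably; then $\sigma_2 U_2 \stackrel{d}{=} \sqrt{r_2^2+\delta^2}\,V_2'$. Expanding both sides to order $\delta^2$ reduces \eqref{eq:Phi} to $F_{r_1}''(r_2^2) \geq 0$ for all $r_1, r_2 > 0$, where $F_{r_1}(s) := g(r_1,\sqrt{s})$. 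The display above gives
\[
F_{r_1}''(r_2^2) = \frac{1}{8 r_2^5}\int_0^{r_2} t(r_2^2-t^2)\,\Psi_{r_1}'''(t)\,\dd t, \qquad \Psi_{r_1}(w) := \E\,\Phi(w + r_1 V),
\]
and $\Psi_{r_1}$ is in fact $C^4$, with $\Psi_{r_1}'''(0) = 0$ (by evenness of $\Phi''$) and $\Psi_{r_1}''''(0) = \Phi'''(r_1)/r_1$ (by oddness of $\Phi'''$). A Taylor expansion of $\Psi_{r_1}'''$ near $t=0$ then gives $F_{r_1}''(r_2^2) = \Phi'''(r_1)/(60 r_1) + o(1)$ as $r_2 \to 0^+$, forcing $\Phi'''(r_1) \geq 0$ for every $r_1 > 0$. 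The reversed inequality is treated by applying the same argument to $-\Phi$. I expect the main hurdle to be the symmetry-and-monotonicity case analysis in the inductive lemma, since the hypothesis $\Phi''' \geq 0$ on $[0,\infty)$ does not survive naive averaging against a symmetric variable and only propagates through the layered reduction described above.
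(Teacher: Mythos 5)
Your proof is correct and takes a genuinely different route from the paper's at the key technical step. Both start with the same Khintchine/Jensen reduction to coordinatewise convexity of $s\mapsto g(r_1,\ldots,\sqrt{s},\ldots,r_n)$, but the paper then invokes closure of symmetric unimodal laws under convolution (Lemma~\ref{lm:unimodsum}) together with the scale-mixture representation (Lemma~\ref{lm:unimodmix}) to replace the tail sum $\sum_{j\geq 2}r_jV_j$ by a single scaled uniform, reducing everything to Lemma~\ref{lm:Phi}: that $h(x)=\int_{-b}^b\int_{-a}^a\Phi(u+\sqrt{x}\,v)\,\dd u\,\dd v$ is convex for all $a,b>0$ exactly when $\Phi'''\geq0$ on $[0,\infty)$. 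Inside that lemma the paper proves only the one-variable analogue of your propagation estimate, namely that $y\mapsto\int_{-1}^1\Phi'(y+t)\,\dd t$ is convex (equivalently $\int_{-1}^1\Phi'''(y+t)\,\dd t\geq0$ for $y\geq 0$), and concludes by monotonicity of slopes rather than via an explicit second-derivative formula. You instead avoid the unimodality reduction entirely through your inductive lemma $\E\,\Phi'''\bigl(w+\sum_j r_jV_j\bigr)\geq 0$ for all $w\geq0$ — the observation that the sign condition on $\Phi'''$ is preserved under averaging against symmetric uniforms — and then read off convexity from the identity $F''(s)=\tfrac{1}{8s^{5/2}}\int_0^{\sqrt{s}}t(s-t^2)\Psi'''(t)\,\dd t$, whose kernel is nonnegative; that identity checks by integration by parts using $\Psi'(0)=0$, and the induction (the even-plus-nondecreasing-on-$[0,\infty)$ case split) is sound. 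Your necessity argument also differs in flavor: rather than subtracting a cubic in the inner variable and sending the inner scale $b\to0^+$ to extract $\tfrac{1}{30}x^2\Phi'''(a)$ as the paper does, you perturb the scale of one factor to a two-point mixture (which remains symmetric unimodal, hence admissible), expand to order $\delta^2$ to derive $F_{r_1}''\geq0$, and then Taylor-expand $\Psi_{r_1}'''$ at $0$ — legitimate since convolving $\Phi\in C^3$ with a uniform gives $\Psi_{r_1}\in C^4$ with $\Psi_{r_1}'''(0)=0$ and $\Psi_{r_1}''''(0)=\Phi'''(r_1)/r_1$ — to recover $\Phi'''(r_1)/(60r_1)\geq0$ as $r_2\to0^+$. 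The net effect is that your route makes the propagation of the sign of $\Phi'''$ under uniform mixing fully explicit, whereas the paper's delegates that work to the unimodal-decomposition machinery and in exchange isolates the clean, self-contained two-parameter Lemma~\ref{lm:Phi}; both arguments are of comparable length and difficulty.
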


As we will explain in Remark \ref{rem:regularity}, thanks to the convexity of the function $|x|^p$ for $p \geq 1$, this theorem allows us to recover the sharp Khintchine inequality for symmetric uniform random variables of \cite{LO} for all $p \geq 1$ (see also Proposition \ref{prop:p<1}).



\subsection{The log-concave moment problem} Recall that a function $f:\R^n\to\R_+$ is called log-concave if $f=e^{-V}$ for a convex function $V:\R^n \to\R\cup\{\infty\}$. 
For a symmetric log-concave function $f:\R\to\R_+$ and $p_1,\ldots,p_{n+1}\in(-1,\infty)$, consider the moment functionals $m_i(f)=\int_0^\infty t^{p_i}f(t)\dd t$, $i=1,\ldots,n+1$. For simplicity, we shall restrict our attention to the class $\mc{L}$ of symmetric log-concave functions that additionally satisfy $f(0)=1$. Our goal is to describe all possible sequences $(m_1,\ldots, m_n)$ arising as moment sequences of functions $f \in \mc{L}$, i.e. $m_i=m_i(f)$ for $i=1,\ldots,n$. For $k\geq0$, consider the following classes of \emph{simple} log-concave functions,
\begin{equation} \label{eq:Lndefine}
\begin{split}
\mc{L}_{2k}^-&=\big\{f(t)=\exp\big(-a_1|t|-a_2(|t|-b_2)_+-\cdots-a_k(|t|-b_k)_+ \big){\bf 1}_{|t|\leq b_{k+1}}\big\}\\
\mc{L}_{2k+1}^+ &= \big\{f(t)=\exp\big(-a_1|t|-a_2(|t|-b_2)_+-\cdots-a_{k+1}(|t|-b_{k+1})_+\big\}\\
\mc{L}_{2k}^+&=\big\{f(t)=\exp\big( -a_1(|t|-b_1)_+-\cdots-a_k(|t|-b_k)_+\big)\big\}\\
\mc{L}_{2k+1}^-&=\big\{f(t)=\exp\big( -a_1(|t|-b_1)_+-\cdots-a_k(|t|-b_k)_+\big){\bf 1}_{|t| \leq b_{k+1}}\big\},
\end{split}
\end{equation}
where the parameters satisfy $a_1, a_2, \ldots \in[0,\infty]$ and $0 \leq b_1 \leq b_2 \leq \ldots\leq\infty$. Here and throughout we will adopt the convention that $\infty \cdot 0=0$. We also set $\mc{L}_0^\pm = \{\1_{\{0\}},1\}$. For $n\geq0$, the space of parameters $(a,b)$ corresponding to $\mc{L}_n^\pm$ will be denoted by $\mc{P}_n^\pm$.
Notice that each $\mc{L}_n^{\pm}$ is an $n$-parameter family of functions. Moreover, these families form a hierarchical structure, namely it is not hard to check that
\begin{equation}\label{hierarchy}
\mc{L}_{n-1}^+ \cup \mc{L}_{n-1}^- = \mc{L}_{n}^+ \cap \mc{L}_{n}^- .
\end{equation}
It turns out that all possible moment sequences $(m_1,\ldots,m_n)$ arise as moment sequences of members of $\mc{L}_n^\pm$. To be more precise, we show the following theorem.

\begin{theorem} \label{moments-main}
Let $n\geq1$, $f\in\mc{L}$ and let $p_1,\ldots,p_{n+1}\in(-1,\infty)$ be distinct.
\begin{enumerate}
\item[\emph{(i)}] There exist unique functions $f_+ \in \mc{L}_n^+$ and $f_- \in \mc{L}_n^-$ such that \begin{equation} 
m_i(f)=m_i(f_+)=m_i(f_-), \ \ \  \mbox{for every } \ i=1,\ldots,n.
\end{equation}
\item[\emph{(ii)}] Let $p_{i_1}<p_{i_2}<\ldots<p_{i_{n+1}}$ be the increasing rearrangement of $p_1,\ldots,p_{n+1}$. If $n+1=i_k$ and $n+1-k$ is even, then 
\begin{equation} 
m_{n+1}(f_-) \leq m_{n+1}(f) \leq m_{n+1}(f_+).
\end{equation}
If $n+1-k$ is odd, then the above inequalities are reversed. Moreover, equality holds only if $f=f_+$ or $f=f_-$ respectively.

\end{enumerate}     
\end{theorem}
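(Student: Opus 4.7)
The plan is to apply the general template from \eqref{eq:generalstrategy} with the test function $\varphi(t)=t^{p_{n+1}}$ on $(0,\infty)$, exploiting the fact that $\{t^{p_1},\dots,t^{p_{n+1}}\}$ is a Chebyshev system on the positive half-line---distinctness of the $p_i$ ensures that any non-trivial real linear combination has at most $n$ positive zeros, counted with multiplicity.

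For the uniqueness in (i), I would observe that if two members $g_1,g_2\in\mc{L}_n^\pm$ shared the first $n$ moments then $g_1-g_2$ would be orthogonal on $(0,\infty)$ to each $t^{p_i}$, which (by the Chebyshev property) forces at least $n$ sign changes. Simultaneously, on each maximal subinterval where both $g_1,g_2$ are exponential, $\log g_1-\log g_2$ is affine and so $g_1-g_2$ has at most one zero; the combined breakpoint structure inherited from \eqref{eq:Lndefine}, together with the hierarchy \eqref{hierarchy}, limits the total to at most $n-1$ sign changes, contradicting the Chebyshev lower bound. For existence I plan to induct on $n$. The continuous moment map $\Phi^\pm:\mc{P}_n^\pm\to\R_{>0}^n$ is injective by the uniqueness just established; a properness and invariance-of-domain argument, combined with the inductive hypothesis handling the boundary of $\mc{P}_n^\pm$ (identified with $\mc{P}_{n-1}^\pm$ via \eqref{hierarchy}), shows that $\Phi^\pm$ is a homeomorphism onto an open set whose closure contains every moment vector of $\mc{L}$. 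A softer alternative is to appeal to the Fradelizi--Gu\'edon localisation theorem to produce an extremiser of $m_{n+1}$ subject to the $n$ moment constraints and then verify that it lies in $\mc{L}_n^\pm$.

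For the comparison in (ii), set $\psi=f-f_\pm$. The moment matching together with the Chebyshev lower bound yields at least $n$ sign changes of $\psi$ in $(0,\infty)$; combining log-concavity of $f$ with the piecewise log-affine structure of $f_\pm$ and the normalisation $f(0)=f_\pm(0)=1$, I would prove the matching upper bound, so the sign changes occur at exactly $n$ points $t_1<\cdots<t_n$. The Chebyshev property then determines unique $c_1,\dots,c_n\in\R$ such that
\begin{equation}
h(t)=t^{p_{n+1}}+\sum_{i=1}^n c_i t^{p_i}
\end{equation}
vanishes at each $t_j$; these exhaust the zeros of $h$ on $(0,\infty)$, so $h$ changes sign at each $t_j$ and nowhere else. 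Using \eqref{eq:generalstrategy} with these coefficients,
\begin{equation}
m_{n+1}(f)-m_{n+1}(f_\pm)=\int_0^\infty h(t)\,\psi(t)\,\dd t,
\end{equation}
and the integrand has a definite sign. To read off that sign---and thereby establish the parity rule in $n+1-k$---I would inspect the sign of $h$ on the extreme interval $(t_n,\infty)$, which is governed by whichever of $p_1,\dots,p_{n+1}$ is largest, and then track the flips of $\psi$ walking from $\infty$ back to $0$ against the position $k$ of $p_{n+1}$ in the increasing rearrangement $p_{i_1}<\cdots<p_{i_{n+1}}$. Strict inequality unless $f=f_\pm$ follows because the integrand is then non-zero on a set of positive measure.

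The principal obstacle is the exact sign-change count. Log-concavity of $f$ alone yields only the weaker estimate of two crossings per log-affine piece of $f_\pm$, and hence an unhelpful total of $n+1$. Shaving off the extra unit requires carefully exploiting the boundary conventions hard-wired into the $+/-$ classes---equality at $0$, at the edge of the support, or in the tail at infinity---which is precisely what the four cases of \eqref{eq:Lndefine} are designed to encode. The subsequent parity and sign bookkeeping in (ii), once the count is in hand, is essentially mechanical.
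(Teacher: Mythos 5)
Your outline follows the published proof essentially step for step: uniqueness from the Chebyshev sign-change bound (Lemmas \ref{l1}--\ref{l3} and \ref{injectivity}), existence by induction coupled with an invariance-of-domain argument once the boundary of the image is identified (Lemmas \ref{top1}--\ref{top2} and Theorem \ref{A=B=C}), and the parity bookkeeping in (ii) from the correction $h$ together with a determinant/Cramer argument controlling the sign of its leading coefficient (Proposition \ref{prop:extremalmoments}). You correctly flag the exact sign-change count for $f-f_\pm$ as the crux --- the paper likewise relegates it to ``a careful case analysis'' --- and the remaining technical debt in your sketch (the metric $d$ making the moment map continuous and the parameter spaces compact, the fact that $e_n^\pm$ is injective only on the interior of $\mc{P}_n^\pm$, and the precise boundary identification required before invariance of domain applies) is exactly what Proposition \ref{prop:technical} and the inductive characterisation $\mc{A}_n^\pm=\mc{B}_n=\mc{C}_n$ in Theorem \ref{A=B=C} supply.
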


The above theorem should be compared with the work of Fradelizi and Gu\'edon on extremizing convex functionals under linear constraints, see \cite[Theorem~2]{FG}. There, the authors work with the class $\mc{L}_M$ of all log-concave functions $f$ supported on the bounded interval $[0,M]$, where $M\in(0,\infty)$. According to their theorem, among all log-concave functions $f \in \mc{L}_M$ with fixed values of $m_i(f)$, $i=1,\ldots,n$, the ones which maximise (or minimise) $m_{n+1}(f)$ have to be of a specific form, namely $f=e^{-V}$, where $V$ is a piecewise linear, convex function with at most $n$ linear pieces. In fact, a similar statement is true in much greater generality, that is, when one maximises a convex functional over a set of log-concave functions $f \in \mc{L}_M$ satisfying arbitrary linear constraints. Nevertheless, a log-concave function $f=e^{-V}\in\mc{L}_M$ with $V$ being piecewise linear with at most $n$ linear pieces is determined by $2n$ parameters, namely the slopes of these linear pieces, the $n-1$ points where these slopes (potentially) change and the value of $f(0)$. In contrast to that, the classes of simple functions $\mc{L}_n^+$ and $\mc{L}_n^-$ appearing in Theorem \ref{moments-main} depend on $n$ free parameters each and are in one-to-one correspondence with sequences of moments $(m_1,\ldots,m_n)$. Theorem \ref{A=B=C} in Section \ref{sec:momentcomparison} provides further insight into the structure of the set of moment sequences of symmetric log-concave functions.

The rest of this paper is organised as follows. In Section \ref{sec:proofkhintch} we present the proofs of Theorems \ref{thm:khintchexp-edge} and \ref{thm:khintchexp-gauss} along with the derivation of Corollaries \ref{cor:Bpnfixedn} and \ref{cor:khintchBpn}. The proof of Theorem \ref{thm:Phi} and some related remarks appear in Section \ref{sec:remarks}.  Finally, Section \ref{sec:momentcomparison} contains the proof of Theorem \ref{moments-main}.

\subsection*{Acknowledgments}

We are indebted to Olivier Gu\'edon for his great help with a preliminary version of this manuscript, valuable feedback and constant encouragement. We are also grateful to Rafa{\l}  Lata\l a for a stimulating discussion. Piotr Nayar would like to thank Bo'az Klartag for his kind hospitality at the Weizmann Institute of Science in August 2017. The accommodation during this visit has been provided from the ERC Starting Grant DIMENSION.


\section{Sharp Khintchine inequalities on $B_q^n$}\label{sec:proofkhintch}

We start by proving Theorems \ref{thm:khintchexp-edge} and \ref{thm:khintchexp-gauss}. Let $Y_1^{(q)}, Y_2^{(q)},\ldots$ be i.i.d. random variables with density $f_q(x) = c_qe^{-|x|^q}$, where $c_q=\big(2\Gamma(1+1/q)\big)^{-1}$ is the normalising constant and $q\in[2,\infty)$. For $p\in(0,\infty)$, consider also the normalised random variables
\begin{equation}
Y_{i,p}^{(q)} = Y_i^{(q)}/\|Y_i^{(q)}\|_p.
\end{equation}
The essence of our argument comprises two main parts. First, we show that the densities $f_q$ interlace well as $q$ varies which gives the monotonicity of $q \mapsto \E h(Y_{i,p}^{(q)})$ for certain test functions $h$ and every $p\in(0,\infty)$ (see Lemma \ref{lm:singlecomparisonp}; the same idea was used for instance in \cite{KK} and \cite{BN}). Afterwards, combining this with an inductive procedure gives the monotonicity of moments of $S=\sum_{i=1}^n a_i Y_i^{(q)}$ with respect to $q$. Finally, comparing against Gaussian random variables, which correspond to $q=2$, gives the desired results.

We remark that each $Y_i^{(q)}$ is a symmetric unimodal random variable, that is, a continuous random variable whose density is even and nonincreasing on $[0,\infty)$. We shall need two basic facts about symmetric unimodal random variables (for the proofs see, for instance, Lemmas 1 and 2 in \cite{LO}).

\begin{lemma}\label{lm:unimodmix}
A symmetric unimodal random variable is of the form $R\cdot U$, where $R$ is a positive random variable and $U$ is uniformly distributed on $[-1,1]$, independent of $R$.
\end{lemma}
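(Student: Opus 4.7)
The plan is to establish the representation via a layer-cake decomposition of the density $f$ of $X$, followed by a Fubini rearrangement that recognises the resulting expression as the density of a product $R \cdot U$.

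First, I would reduce to the density side: by assumption $X$ has an even density $f$ which is non-increasing on $[0,\infty)$, and after modifying on a null set we may take $f$ to be right-continuous there. Since $f$ is non-increasing, nonnegative and integrable, the formula
\begin{equation*}
\nu\big([r,\infty)\big) = f(r), \qquad r \geq 0,
\end{equation*}
defines a (locally) finite Borel measure $\nu$ on $[0,\infty)$. The key identity to record is
\begin{equation*}
f(x) = \nu\big([|x|,\infty)\big) = \int_0^\infty \mathbf{1}_{\{r \geq |x|\}} \, \dd\nu(r) = \int_0^\infty \mathbf{1}_{[-r,r]}(x) \, \dd\nu(r),
\end{equation*}
valid for every $x \in \R$.

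Second, I would rewrite this as a genuine mixture of symmetric uniform densities. Multiplying and dividing by $2r$, one gets
\begin{equation*}
f(x) = \int_0^\infty \frac{1}{2r}\mathbf{1}_{[-r,r]}(x) \, \dd\mu(r), \qquad \dd\mu(r) := 2r \, \dd\nu(r).
\end{equation*}
Integrating over $x \in \R$ and applying Fubini gives $\int_0^\infty 2r \, \dd\nu(r) = \int_\R f = 1$, so $\mu$ is a probability measure on $(0,\infty)$ (note that $\nu$ assigns no mass to $\{0\}$ since the uniform density blows up there, but $r=0$ only contributes to the point $x=0$, which has Lebesgue measure zero, so the formula is consistent on the set where it matters).

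Finally, let $R$ have law $\mu$ and $U$ be uniform on $[-1,1]$, independent of $R$. Conditioning on $R=r$, the variable $R \cdot U$ is uniform on $[-r,r]$ with density $\tfrac{1}{2r}\mathbf{1}_{[-r,r]}$, so the unconditional density of $R \cdot U$ is precisely the mixture displayed above, which equals $f$. Hence $X \stackrel{d}{=} R \cdot U$, as required. The only real subtlety is the measure-theoretic bookkeeping in defining $\nu$ from a merely non-increasing (possibly discontinuous) $f$ and in handling any atom that $\nu$ might carry at $r=0$; both are handled by the right-continuous normalisation, since then $\nu(\{0\}) = 0$ whenever $f$ is bounded near the origin, which it is by integrability and monotonicity.
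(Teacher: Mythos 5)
Your proof is correct; it is the standard layer-cake argument that expresses the symmetric unimodal density $f$ as a mixture of normalised indicator densities $\tfrac{1}{2r}\mathbf{1}_{[-r,r]}$ with mixing law $\dd\mu = 2r\,\dd\nu$, where $\nu$ is the Lebesgue--Stieltjes measure of $-f$ on $[0,\infty)$, and the Fubini computation shows $\mu$ is a probability measure so that $R\sim\mu$ independent of $U$ gives $RU \stackrel{d}{=} X$. The paper itself does not prove the lemma but refers to Lemmas 1 and 2 of the Lata\l a--Oleszkiewicz paper \cite{LO}, where essentially this argument appears, so you have matched the intended approach. Two cosmetic points worth tightening: the definition of symmetric unimodal permits $f(0)=+\infty$, in which case $\nu$ is only $\sigma$-finite on $(0,\infty)$ rather than locally finite on $[0,\infty)$, though this harms nothing since $\mu$ remains a probability measure; and the cleanest reason $\mu(\{0\})=0$ is simply that $\dd\mu=2r\,\dd\nu$ vanishes at $r=0$, irrespective of whether $\nu$ carries an atom at the origin, rather than the slightly circular remark about the uniform density blowing up.
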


\begin{lemma}\label{lm:unimodsum}
A sum of independent symmetric unimodal random variables is a symmetric unimodal random variable.
\end{lemma}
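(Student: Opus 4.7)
The plan is to induct on the number of summands, so that the inductive step reduces the claim to the case of two independent symmetric unimodal random variables $X_1, X_2$. By Lemma \ref{lm:unimodmix}, I would represent $X_i = R_i U_i$ for $i = 1,2$, where $R_1, R_2$ are nonnegative random variables, $U_1, U_2$ are uniformly distributed on $[-1,1]$, and $R_1, R_2, U_1, U_2$ are mutually independent. Conditioning on $(R_1, R_2) = (r_1, r_2)$ then reduces matters to understanding the density of a scaled sum $r_1 U_1 + r_2 U_2$ of two independent symmetric uniforms.

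For fixed $r_1, r_2 > 0$, the density of $r_1 U_1 + r_2 U_2$ equals the convolution $\tfrac{1}{2r_1}\1_{[-r_1,r_1]} \ast \tfrac{1}{2r_2}\1_{[-r_2,r_2]}$. A direct computation gives a symmetric trapezoidal function (triangular if $r_1 = r_2$) supported on $[-(r_1+r_2),\,r_1+r_2]$ that is even and nonincreasing on $[0,\infty)$, hence symmetric unimodal. If one of the $r_i$ vanishes the convolution collapses to a rescaled rectangular density, which is still symmetric unimodal, so the conclusion persists after integrating over $(r_1, r_2)$.

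I would then express the density of $X_1 + X_2$ as the mixture
\begin{equation*}
f_{X_1 + X_2}(t) \;=\; \E\bigl[g_{R_1, R_2}(t)\bigr],
\end{equation*}
where $g_{r_1, r_2}$ denotes the trapezoidal density produced above. Since the class of symmetric unimodal functions is closed under pointwise averaging (averages of even functions are even, and averages of functions nonincreasing on $[0,\infty)$ are nonincreasing on $[0,\infty)$), Fubini's theorem immediately yields that $f_{X_1 + X_2}$ is itself even and nonincreasing on $[0,\infty)$, i.e.\ $X_1 + X_2$ is symmetric unimodal.

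I do not expect any serious obstacle: the only nuisance is the bookkeeping for the degenerate cases where $R_i$ can have atoms (in particular at $0$), and the formal verification that the iterated integral representing the convolution of the overall densities coincides with the claimed mixture. Both issues are resolved by a routine application of Fubini's theorem together with Lemma \ref{lm:unimodmix}.
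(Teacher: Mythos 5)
Your proof is correct. Note, however, that the paper does not actually prove this lemma: it simply cites Lemmas~1 and~2 of Lata\l a--Oleszkiewicz \cite{LO} for both this fact and Lemma~\ref{lm:unimodmix}. Your argument is the standard one (essentially Wintner's theorem): reduce by induction to two summands, use Lemma~\ref{lm:unimodmix} to write each $X_i = R_i U_i$, observe that the convolution of two symmetric uniform densities is a symmetric trapezoid (hence even and nonincreasing on $[0,\infty)$), and conclude by noting that the class of symmetric unimodal densities is a convex cone closed under integral mixing, so that averaging over $(R_1,R_2)$ preserves the property. One small remark: under the paper's convention that symmetric unimodal random variables are \emph{continuous}, the representation $X_i = R_i U_i$ forces $R_i > 0$ almost surely, so the degenerate atom-at-zero case you flag does not in fact arise; the only measure-theoretic point to verify is the Fubini step identifying the convolution of the two mixture densities with the mixture of the trapezoidal densities, which is routine.
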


In other words, symmetric unimodal random variables are mixtures of uniform random variables and the convolution of even and unimodal densities is even and unimodal. (Note that analogous properties are also true for symmetric random variables.)

\subsection{Proof of Theorems \ref{thm:khintchexp-edge} and \ref{thm:khintchexp-gauss}}
The main result of this section is the following monotonicity statement, which implies Theorems \ref{thm:khintchexp-edge} and \ref{thm:khintchexp-gauss}.

\begin{theorem}\label{thm:monot_single}
Let $a_1,\ldots,a_n$ be real numbers. 
\begin{enumerate}
\item[\emph{(i)}] The function $\psi_1: (0,\infty)^n \to \R$ given by
\begin{equation}
	\psi_1(q_1,\ldots,q_n) = \E\Big|\sum_{i=1}^n a_iY_{i,p}^{(q_i)}\Big|^p
\end{equation}
is coordinatewise nondecreasing when $p \geq 2$ and nonincreasing when $p\in[1,2]$.
\item[\emph{(ii)}] The function $\psi_2:(0,\infty)^n \to \R$ given by
\begin{equation}
\psi_2(q_1,\ldots,q_n) = \E\Big|\sum_{i=1}^n a_i Y_{i,2}^{(q_i)}\Big|^p
\end{equation}
is coordinatewise nonincreasing when $p \geq 2$ and nondecreasing when $p\in[1,2]$.
\end{enumerate}
\end{theorem}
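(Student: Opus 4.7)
The plan is to reduce both claims to a one-variable monotonicity principle and then prove it with the sign-change strategy outlined in \eqref{eq:generalstrategy}. By symmetry, it suffices to verify monotonicity in a single variable, say $q_n$, with the others fixed. Writing $j\in\{1,2\}$ for the two parts and setting $\ast=p$ when $j=1$ and $\ast=2$ when $j=2$, define $Z:=\sum_{i=1}^{n-1}a_iY_{i,\ast}^{(q_i)}$. By Lemmas \ref{lm:unimodmix} and \ref{lm:unimodsum}, $Z$ is symmetric unimodal and independent of $Y_n^{(q_n)}$, so conditioning on $Z$ gives
\begin{equation}
\psi_j(q_1,\ldots,q_n)=\int_\R H(y)\,g_{q_n}^{(\ast)}(y)\,\dd y,
\end{equation}
where $H(y):=\E|Z+y|^p$ is even and convex and $g_q^{(\ast)}$ is the density of $a_nY_\ast^{(q)}$. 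In each case the family $\{g_q^{(\ast)}\}_q$ obeys two constraints: $\int g_q^{(\ast)}=1$, and $\int\rho\,g_q^{(\ast)}$ is independent of $q$, with $\rho(y)=|y|^p$ for $j=1$ and $\rho(y)=y^2$ for $j=2$.

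Next I would verify that for $2\leq q_1<q_2$ the difference $g_{q_1}^{(\ast)}-g_{q_2}^{(\ast)}$ has exactly four sign changes on $\R$. Since $g_q^{(\ast)}(y)=\kappa_q\exp(-\lambda_q^q|y|^q)$ for suitable $\kappa_q,\lambda_q>0$, the log-ratio on $y>0$ is $\alpha-\lambda_{q_1}^{q_1}y^{q_1}+\lambda_{q_2}^{q_2}y^{q_2}$ and hence has a single interior critical point; it crosses zero at most twice on $(0,\infty)$, while the two moment constraints force at least two crossings $0<s_1<s_2$. A tail comparison identifies the sign pattern on $\R_+$ as $+,-,+$. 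Using the constraints, for any $c_1,c_2\in\R$,
\begin{equation}\label{eq:subtract-phi}
\int_\R H\bigl(g_{q_1}^{(\ast)}-g_{q_2}^{(\ast)}\bigr)\,\dd y=\int_\R\phi\bigl(g_{q_1}^{(\ast)}-g_{q_2}^{(\ast)}\bigr)\,\dd y,
\end{equation}
where $\phi(y):=H(y)-c_1-c_2\rho(y)$, and I would fix $c_1,c_2$ so that $\phi(s_1)=\phi(s_2)=0$.

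The heart of the proof is to show that $\phi$ has no further zeros on $(0,\infty)$, which combined with the sign pattern above pins the sign of the integrand in \eqref{eq:subtract-phi}. I would deduce this from monotonicity of $y\mapsto H'(y)/\rho'(y)$, which forces $\phi'/\rho'$ to be monotone and hence $\phi$ to have at most one critical point on $(0,\infty)$. For $j=1$, direct computation yields
\begin{equation}
\frac{\dd}{\dd y}\!\left(\frac{H'(y)}{py^{p-1}}\right)=-\frac{p-1}{y^p}\,\E\bigl[Z\,|Z+y|^{p-2}\bigr],
\end{equation}
and the symmetry of $Z$ reduces the claim to the pointwise inequality $\mathrm{sgn}(Z)\bigl(|Z+y|^{p-2}-|Z-y|^{p-2}\bigr)\geq0$, which holds for $p\geq2$ and reverses for $p\in[1,2]$. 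For $j=2$ the relevant derivative is $(yH''(y)-H'(y))/(2y^2)$; here symmetric unimodality of $Z$ is essential. Writing $Z=RU$ with $U$ uniform on $[-1,1]$ via Lemma \ref{lm:unimodmix}, by linearity it suffices to check the sign with $H(y)=I_R(y):=\E_U|RU+y|^p$, and an explicit piecewise evaluation of $I_R$ reduces the claim to the elementary inequality
\begin{equation}
(p-2)(A^p-B^p)\pm pAB(A^{p-2}-B^{p-2})\geq0\qquad(A\geq B\geq 0),
\end{equation}
which holds for $p\geq2$ with both choices of sign and is reversed for $p\in[1,2]$. Once $\phi$ is pinned down to vanish on $(0,\infty)$ only at $s_1,s_2$, a brief case analysis across the four regimes $(j,p\geq2)$ and $(j,p\in[1,2])$ yields the four claimed monotonicities.

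The main obstacle is the second monotonicity lemma, for $j=2$: without the symmetric unimodality of $Z$ the desired sign of $yH''(y)-H'(y)$ genuinely fails (e.g.\ for $Z=\pm c$ equiprobable with $p\in(2,3)$), so the mixture representation must be deployed in an essential way, and the verification of the auxiliary inequality on $A\geq B\geq 0$ is the one genuinely non-trivial calculation behind the whole scheme.
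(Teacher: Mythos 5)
Your proposal is correct and follows essentially the same strategy as the paper: reduce to one variable, condition on the symmetric (and, for part \emph{(ii)}, unimodal) remainder, and run the sign-change argument based on the interlacing pattern of the densities. Your monotonicity of $H'/\rho'$ is exactly equivalent to the convexity/concavity of the paper's conditional function in Lemma \ref{lm:conv1/p}, the pointwise inequalities you reduce to coincide with those computed there, and the only organizational difference is that the paper packages the density-interlacing step into the reusable Lemma \ref{lm:singlecomparisonp} whereas you carry it out inline.
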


A fact similar to the monotonicity of $\psi_1$ for slightly different random variables has been established in the case $p=1$ in \cite{BN} (see the proof of Theorem 3 therein). We first show that this theorem implies Theorem \ref{thm:khintchexp-edge} and \ref{thm:khintchexp-gauss}.

\begin{proof}[Proof of Theorem \ref{thm:khintchexp-edge} and \ref{thm:khintchexp-gauss}] 
To prove Theorem \ref{thm:khintchexp-edge}, note that $Y_{i,p}^{(2)}$ are i.i.d. centred Gaussian random variables with $p$-th moment equal to one.  By Theorem \ref{thm:monot_single}{\it (i)}, for every unit vector $(a_1,\ldots,a_n)$ and $p\geq2$, we have
\begin{align*}
\Big\|\sum_{i=1}^n a_i\frac{Y_i^{(q)}}{\|Y_i^{(q)}\|_p} \Big\|_p &= \Big\|\sum_{i=1}^n a_iY_{i,p}^{(q)} \Big\|_p 
\geq 
\Big\| \sum_{i=1}^n a_iY_{i,p}^{(2)} \Big\|_p 
= \left\| \Big(\sum_{i=1}^n a_i^2\Big)^{1/2} Y_{1,p}^{(2)} \right\|_p = 1,
\end{align*}
which immediately yields \eqref{eq:khintchexp-edge}. When $p \in [1,2]$, the above estimate gets reversed. 

To get Theorem \ref{thm:khintchexp-gauss}, note that $Y_{i,2}^{(2)}$ are i.i.d. standard Gaussian random variables.  By Theorem \ref{thm:monot_single}\emph{(ii)}, for every real numbers $a_1,\ldots,a_n$ and $p\geq2$, we have
\begin{align*}
\Big\| \sum_{i=1}^n a_i\frac{Y_i^{(q)}}{\|Y_i^{(q)}\|_2} \Big\|_p &= \Big\|\sum_{i=1}^n a_iY_{i,2}^{(q)} \Big\|_p 
\leq 
\Big\| \sum_{i=1}^n a_iY_{i,2}^{(2)} \Big\|_p 
= \gamma_p\Big\|\sum_{i=1}^n a_i\frac{Y_i^{(q)}}{\|Y_{i}^{(q)}\|_2} \Big\|_2
\end{align*}
which immediately yields \eqref{eq:khintchexp-gauss}. When $p \in [1,2]$, the above estimate gets reversed. The constant $\gamma_p$ is sharp by the Central Limit Theorem.
\end{proof}

The proof of Theorem \ref{thm:monot_single} relies on the following lemma.

\begin{lemma}\label{lm:singlecomparisonp}
Let $0 <  q < r$ and $p\in(0,\infty)$. For every convex function $h:[0,+\infty)\to\R$ we have
\begin{equation}
\E h(|Y_{1,p}^{(q)}|^p) \geq \E h(|Y_{1,p}^{(r)}|^p).
\end{equation}
\end{lemma}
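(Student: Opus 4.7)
The plan is to apply the general strategy of \eqref{eq:generalstrategy} with two linear moment constraints: both $|Y_{1,p}^{(q)}|^p$ and $|Y_{1,p}^{(r)}|^p$ have mean $1$ by definition of $Y_{1,p}^{(q)}$, and their densities integrate to $1$. The test function $h$ plays the role of $\vp$, and we subtract an affine function $\alpha+\beta s$ (so in the notation of \eqref{eq:generalstrategy} we have $p_1=0$ and $p_2=1$).

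First I would compute the density $g_q$ of $|Y_{1,p}^{(q)}|^p$ explicitly. A direct change of variables from the density $c_q e^{-|x|^q}$ of $Y_1^{(q)}$ gives
\begin{equation*}
g_q(s) = A_q\, s^{1/p-1}\, e^{-B_q s^{q/p}}, \qquad s>0,
\end{equation*}
for positive constants $A_q, B_q$ depending only on $p$ and $q$; the normalisation yields the two moment identities $\int_0^\infty (g_q - g_r)\,\dd s = 0$ and $\int_0^\infty s\,(g_q-g_r)\,\dd s=0$.

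Next I would pin down the sign pattern of $g_q-g_r$. Writing $g_q(s)=g_r(s)$ in logarithmic form and substituting $u=s^{1/p}$, the positive crossings are the zeros of
\begin{equation*}
F(u) = \log(A_q/A_r) - B_q u^q + B_r u^r.
\end{equation*}
Since $F'(u) = u^{q-1}\bigl(-qB_q + r B_r u^{r-q}\bigr)$ has a unique positive root, Rolle's theorem gives at most two zeros of $F$ on $(0,\infty)$, while the two moment identities above force $g_q - g_r$ to change sign at least twice. Hence there are unique crossing points $0<s_1<s_2<\infty$, and because $q<r$ makes $g_r$ decay faster than $g_q$ as $s\to\infty$, the sign pattern must be $g_q-g_r \geq 0$ on $[0,s_1]\cup[s_2,\infty)$ and $g_q-g_r\leq 0$ on $[s_1,s_2]$.

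With the crossings located, the proof closes exactly as in \eqref{eq:generalstrategy}. I would pick $\alpha,\beta\in\R$ so that $H(s):=h(s)-\alpha-\beta s$ vanishes at both $s_1$ and $s_2$; the two moment identities then give
\begin{equation*}
\int_0^\infty h(s)(g_q-g_r)\,\dd s = \int_0^\infty H(s)(g_q-g_r)\,\dd s.
\end{equation*}
Since $H$ inherits convexity from $h$ and vanishes at $s_1<s_2$, convexity forces $H\leq 0$ on $[s_1,s_2]$ and $H\geq 0$ on $[0,s_1]\cup[s_2,\infty)$. The signs of $H$ and $g_q-g_r$ then agree on each of the three subintervals, the integrand is nonnegative throughout, and the inequality follows. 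The main obstacle I anticipate is precisely this sign-change count: one must combine the structural monotonicity of $F$ with both normalisation constraints to confirm that the crossings occur in exactly the configuration matching the convexity of $H$; once this is in place, the remainder is a direct application of the scheme outlined in the introduction.
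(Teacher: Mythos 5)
Your proposal is correct and follows essentially the same strategy as the paper's own proof: identify the two crossing points of the densities via a convexity/derivative argument, subtract an affine correction in the moment variable to make the test function vanish at those crossings, and integrate a function of fixed sign. The only cosmetic difference is that you work directly with the density $g_q$ of $|Y_{1,p}^{(q)}|^p$ rather than with the density $\phi_q$ of $Y_{1,p}^{(q)}$ composed with $x\mapsto x^p$; the two viewpoints are related by the obvious change of variables.
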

\begin{proof}
Let $\phi_q$ be the density of $Y_{1,p}^{(q)}$. By symmetry, the assertion is equivalent to
\begin{equation} \label{eq:compgoal}
\int_0^\infty h(x^p)\big(\phi_q(x)-\phi_r(x)\big) \dd x \geq 0.
\end{equation}
Since the density $\phi_s(x)$ is of the form $b_se^{-a_s|x|^s}$, there is an interval $(A,B) \subset (0,\infty)$ such that the difference $\phi_q(x) - \phi_r(x)$ is negative on $(A,B)$ and positive on $(0,A)\cup(B,\infty)$. Indeed, it is clear that the graphs of $\phi_q$ and $\phi_r$ have to intersect on $(0,+\infty)$ at least twice because both functions are probability densities with the same $p$-th moments (see also Lemma \ref{l3}). On the other hand, by the convexity of $x\mapsto\ln\big(\phi_q(x^{1/q})/\phi_r(x^{1/q})\big)$ one can easily check that they cannot intersect more than twice. Finally, $\phi_q(x) - \phi_r(x)$ is plainly positive for $x$ large enough, since $q<r$.

Choose $\alpha$ and $\beta$ such that $h(x^p) - \alpha x^p - \beta$ vanishes at $x = A$ and $x = B$. Since $h$ is convex, the function $h(x^p) - \alpha x^p - \beta$ is nonpositive on $(A,B)$ and nonnegative on $(0,A)\cup (B,\infty)$. Therefore,
\[
\big(h(x^p) - \alpha x^p - \beta\big)\big(\phi_q(x) - \phi_r(x)\big) \geq 0\]
for every $x > 0$ and integrating yields the desired inequality \eqref{eq:compgoal}.
\end{proof}

To derive Theorem \ref{thm:monot_single} from Lemma \ref{lm:singlecomparisonp}, we shall establish the convexity of certain functions $h$, which is settled by the following elementary lemma.  

\begin{lemma}\label{lm:conv1/p}
\begin{itemize}
\item[\it (i)] The function $h_1(x) = |x^{1/p}+1|^p+|x^{1/p}-1|^p$, $x\geq 0$ is convex for $p\in[1,2]$ and concave for $p \geq 2$.
\item[\it (ii)] The function $h_2(x) =  \int_{-1}^{1}|x^{1/2}+u|^p\dd u$, $x \geq 0$ is concave for $p\in[1,2]$ and convex for $p \geq 2$.
\end{itemize}
\end{lemma}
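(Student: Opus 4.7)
Both parts are second-derivative computations that reduce, via the chain rule, to checking the sign of a single linear combination of $g'(y)$ and $g''(y)$, where $g$ is the function obtained by an appropriate change of variables. For (i) I would set $y = x^{1/p}$ and $g(y) = (y+1)^p + |y-1|^p$, so that $h_1(x) = g(x^{1/p})$; differentiating twice and multiplying by a positive factor shows that the sign of $h_1''(x)$ coincides with that of $yg''(y)-(p-1)g'(y)$ for $y>0$. For (ii) I would set $y = \sqrt{x}$ and $G(y) = \int_{y-1}^{y+1}|v|^p\,\dd v$, so that $h_2(x) = G(\sqrt{x})$, and the same computation shows the sign of $h_2''(x)$ matches that of $yG''(y)-G'(y)$.

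For part (i), splitting into the cases $y\geq 1$ and $0\leq y<1$ and collecting terms in $(y+1)^{p-2}$ and $|y-1|^{p-2}$ produces in both regimes
\[
yg''(y) - (p-1)g'(y) = p(p-1)\bigl[|y-1|^{p-2} - (y+1)^{p-2}\bigr].
\]
Since $p(p-1)\geq 0$ for $p\geq 1$ and $0\leq |y-1|\leq y+1$, the bracket has the sign of $2-p$ (using monotonicity of $t\mapsto t^{p-2}$). This gives $h_1''\geq 0$ for $p\in[1,2]$ and $h_1''\leq 0$ for $p\geq 2$.

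For part (ii) I would also split at $y=1$. Writing $a = y+1$ and $b = |y-1|$ and using the identity $c^m - d^m = m\int_d^c t^{m-1}\,\dd t$ on the differences appearing in the explicit formulas for $G'$ and $G''$, routine algebra telescopes the expression into
\[
yG''(y) - G'(y) = \frac{p(p-2)}{2}\int_b^a t^{p-3}\bigl(t^2 + \varepsilon\, ab\bigr)\,\dd t,
\]
with $\varepsilon = +1$ for $y\geq 1$ and $\varepsilon = -1$ for $y\in[0,1)$. In the first case $b = y-1>0$, the integrand is manifestly positive, and the sign is that of $p(p-2)$, as desired.

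The main obstacle is the subcase $y\in[0,1)$, where the integrand $t^{p-3}(t^2-ab)$ changes sign at $t=\sqrt{ab}\in(b,a)$. I would resolve this by applying the involutive substitution $t\mapsto ab/t$ on the subinterval $(b,\sqrt{ab})$, which maps it bijectively onto $(\sqrt{ab},a)$. A direct calculation then combines both pieces into
\[
\int_b^a t^{p-3}(t^2-ab)\,\dd t = \int_{\sqrt{ab}}^a \frac{(t^2-ab)\bigl(t^{2(p-1)} - (ab)^{p-1}\bigr)}{t^{p+1}}\,\dd t,
\]
whose integrand is nonnegative for $p\geq 1$, since on $(\sqrt{ab},a)$ both factors $t^2-ab$ and $t^{2(p-1)}-(ab)^{p-1}$ are nonnegative (the latter because $p-1\geq 0$). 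Multiplying by $\frac{p(p-2)}{2}$ then yields the desired sign in this case as well, which completes the argument.
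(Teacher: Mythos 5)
Your argument is correct. For part \textit{(i)} it is essentially the paper's computation: you set $y=x^{1/p}$ (the paper uses the equivalent $x=y^{-p}$), reduce the sign of $h_1''$ to that of $yg''(y)-(p-1)g'(y)=p(p-1)\bigl[|y-1|^{p-2}-(y+1)^{p-2}\bigr]$, and conclude by the monotonicity of $t\mapsto t^{p-2}$; the paper produces the same bracket (with signs packaged slightly differently in $(p-1)(p-2)$) and argues identically.

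For part \textit{(ii)} your route is genuinely different. The paper writes $h_2'(x)=\tfrac12\,\vp_2(\sqrt{x})/\sqrt{x}$ with $\vp_2(y)=|y+1|^p-|y-1|^p$, notes that $\vp_2(0)=0$ and that $(p-2)\vp_2''(y)=p(p-1)(p-2)\bigl(|y+1|^{p-2}-|y-1|^{p-2}\bigr)\geq 0$ (the very same bracket as in \textit{(i)}), and then invokes the monotonicity-of-slopes lemma for a convex function vanishing at the origin. That trick reuses the computation from \textit{(i)} and, crucially, completely avoids the sign-changing integral that you have to confront. Your approach instead evaluates $yG''(y)-G'(y)$ head-on; the identity
\[
yG''(y)-G'(y)=\frac{p(p-2)}{2}\int_b^a t^{p-3}\bigl(t^2+\varepsilon\,ab\bigr)\,\dd t, \qquad a=y+1,\ b=|y-1|,\ \varepsilon=\mathrm{sgn}(y-1),
\]
is correct (I verified both cases by expanding in $a,b$), and the involutive substitution $t\mapsto ab/t$ on $(b,\sqrt{ab})$ that telescopes the troublesome case $y\in[0,1)$ into
\[
\int_b^a t^{p-3}(t^2-ab)\,\dd t=\int_{\sqrt{ab}}^a \frac{(t^2-ab)\bigl(t^{2(p-1)}-(ab)^{p-1}\bigr)}{t^{p+1}}\,\dd t\ \geq\ 0
\]
is a nice idea and is legitimate. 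Both proofs are valid; the paper's is shorter because the monotonicity-of-slopes device sidesteps the sign analysis, while yours is more self-contained (no auxiliary convexity lemma needed) and treats both parts uniformly through expressions of the form $yF''-cF'$, at the price of the extra substitution step. One small point worth making explicit, which neither proof dwells on: the computations are pointwise second-derivative arguments valid away from the isolated point $y=1$ (equivalently $x=1$), but convexity/concavity across that point is automatic since the first derivative has the correct one-sided monotonicity there.
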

\begin{proof}
{\it (i)} For $y \ne 1$ we have $h_1'(y^{-p})=|1+y|^{p-1}+\textrm{sgn}(1-y)|1-y|^{p-1}$ and therefore
\[
	-p(p-2)y^{-p-1} h_1''(y^{-p}) = (p-2)(h_1'(y^{-p}))' = (p-1)(p-2)[|1+y|^{p-2}-|1-y|^{p-2}]\geq0,
\] 
for all values of $p\geq1$ and $y\geq0$.

\noindent \emph{(ii)} We have
\[
\frac{\dd}{\dd x}\int_{-1}^{1}|x^{1/2}+u|^p\dd u = \frac{\dd}{\dd x}\int_{x^{1/2}-1}^{x^{1/2}+1}|u|^p\dd u = \frac12  (|x^{1/2}+1|^p-|x^{1/2}-1|^p)/ x^{1/2},\]
so our goal is to show that the function $\vp_1(y)=\vp_2(y)/y$, $y \geq 0$, where $\vp_2(y)=|y+1|^p-|y-1|^p$, is monotone. Since $\vp_2(0)=0$, it suffices to observe that for $y \ne 1$
\[
(p-2)\vp_2''(y)=p(p-1)(p-2)(|y+1|^{p-2}-|y-1|^{p-2})\geq0,
\] 
for all values of $p\geq1$ and $y\geq0$, and then use the monotonicity of slopes of the function $(p-2)\vp_2$.
\end{proof} 

\begin{proof}[Proof of Theorem \ref{thm:monot_single}]
It clearly suffices to show the desired monotonicity with respect to $q_1$. To prove monotonicity of $\psi_1$ let us define $S = \sum_{i=2}^na_iY_{i,p}^{(q_i)}$. By symmetry we have
\[
\E\Big|\sum_{i=1}^n a_iY_{i,p}^{(q_i)}\Big|^p = \E\big|a_1Y_{1,p}^{(q_1)}+S\big|^p = \E\big|a_1|Y_{1,p}^{(q_1)}|+S\big|^p = \E_Yh(|Y_{1,q_1}|^p),\]
where, again by symmetry of $S$, we have
\begin{equation}
h(x) = \E_S|a_1x^{1/p}+S|^p =\frac{1}{2}\E_S\Big[|a_1x^{1/p}+S|^p+|a_1x^{1/p}-S|^p\Big],
\end{equation}
which, by virtue of Lemma \ref{lm:conv1/p}, is an average of convex functions when $p\leq2$ (respectively concave when $p\geq 2$). The conclusion follows from Lemma \ref{lm:singlecomparisonp}.

To prove the claim for $\psi_2$ , let $S = \sum_{i=2}^na_iY_{i,2}^{(q_i)}$. By symmetry we can write
\[
\E\Big|\sum_{i=1}^n a_iY_{i,2}^{(q_i)}\Big|^p = \E\big|a_1Y_{1,2}^{(q_1)}+S\big|^p = \E\big|a_1|Y_{1,2}^{(q_1)}|+S\big|^p = \E_Yh(|Y_{1,2}^{(q_1)}|^2),\]
where $h(x) = \E_S|a_1\sqrt{x}+S|^p$. From Lemma \ref{lm:unimodsum}, $S$ is symmetric and unimodal and thus $S$ has the same distribution as $RU$, where $R$ is a positive random variable and $U$ is a uniform random variable on $[-1,1]$, independent of $R$. We therefore have
\begin{equation}
h(x) = \E_R\left[ \frac{1}{2}\int_{-1}^1 |a_1\sqrt{x}+Ru|^p\dd u\right],\end{equation}
for some positive random variable $R$. By virtue of Lemma \ref{lm:conv1/p} this is an average of convex functions when $p\geq2$ (respectively concave when $p\leq 2$) and the conclusion follows from Lemma \ref{lm:singlecomparisonp} with $p=2$.
\end{proof}

\begin{remark}
The unimodality of $Y_i$ is essential for the monotonicity of $\psi_2$ and the derivation of the Gaussian constant $\gamma_p$ in the preceeding argument. In \cite{BN}, Barthe and Naor were interested in determining the optimal constants in the Khintchine inequality (with $p=1$) for a different family of random variables indexed by $q\in[1,\infty)$. Even though the exact analogue of Lemma \ref{lm:singlecomparisonp} was valid in their context as well, the lack of unimodality of those distributions when $q\in[1,2)$ makes an inductive argument as in the proof of Theorem \ref{thm:monot_single} fail and, in fact, the optimal constant for $q=1$ differs from $\gamma_1$ (see \cite{Sz}).
\end{remark}


\subsection{Constants in the Khintchine inequality}

A standard argument leads to Corollary \ref{cor:Bpnfixedn}. We include it for completeness. 


\medskip

\noindent {\it Proof of Corollary \ref{cor:Bpnfixedn}.}
Let $a_1,\ldots,a_n\in\R$ and $p\geq2$. The crucial identity \eqref{eq:Bpnmomentident} implies that \eqref{eq:khintchexp-edge} also holds for a random vector $X=(X_1,\ldots,X_n)$, uniformly distributed on $B_q^n$. Therefore, by homogeneity we get
$$\Big\|\sum_{i=1}^n a_i X_i\Big\|_p \geq \Big(\sum_{i=1}^n a_i^2\Big)^{1/2} \|X_1\|_p = \frac{\|X_1\|_p}{\|X_1\|_2} \Big\|\sum_{i=1}^n a_iX_i\Big\|_2.$$
For the reverse inequality, consider i.i.d. random variables $Y_1,\ldots,Y_n$ with density proportional to $e^{-|x|^q}$. Combining \eqref{eq:Bpnmomentident} and \eqref{eq:khintchexp-gauss}, we deduce that
$$\Big\|\sum_{i=1}^n a_i X_i\Big\|_p = \beta_{p,q,n}\Big\|\sum_{i=1}^n a_i Y_i\Big\|_p \leq \beta_{p,q,n}\gamma_p \Big\|\sum_{i=1}^n a_i Y_i\Big\|_2 = \frac{\beta_{p,q,n}\gamma_p}{\beta_{2,q,n}} \Big\|\sum_{i=1}^n a_iX_i\Big\|_2,$$
which completes the proof of \eqref{eq:Bpnfixedn} for $p\geq2$. The case $p\in[1,2]$ is identical.
\hfill$\Box$

\medskip

Given Corollary \ref{cor:Bpnfixedn}, deriving the constants in Corollary \ref{cor:khintchBpn} is now straightforward, but requires a bit of technical work.

\medskip

\noindent {\it Proof of Corollary \ref{cor:khintchBpn}.} For $n\geq1$, $p\geq2$ and real numbers $a_1,\ldots,a_n$ by Corollary \ref{cor:Bpnfixedn} we get
\begin{equation} \label{eq:usecor3}
\frac{\|X_1\|_p}{\|X_1\|_2}\Big\|\sum_{i=1}^na_iX_i\Big\|_2\leq\Big\| \sum_{i=1}^n a_i X_i\Big\|_p \leq \frac{\beta_{p,q,n}}{\beta_{2,q,n}} \gamma_p \Big\|\sum_{i=1}^n a_iX_i\Big\|_2.
\end{equation}
The optimal values \eqref{eq:khintchBpnconstants} of the constants $A_p, B_p$ in the Khintchine inequality \eqref{eq:khintchineagain} will easily follow from the following claim.

\smallskip

\noindent {\it Claim.} Suppose that $X^{(n)}=\big(X_1^{(n)},\ldots,X_n^{(n)}\big)$ is a random vector, uniformly distributed on $B_q^n$ for some $q\in(2,\infty)$. Then, the sequence $\big\{ \|X_1^{(n)}\|_p/\|X_1^{(n)}\|_2\big\}_{n=1}^\infty$ is nondecreasing.

\smallskip

Assume for now that the claim is true. By the crucial identity \eqref{eq:Bpnmomentident}, the sequences $\big\{ \|X_1^{(n)}\|_p/\|X_1^{(n)}\|_2\big\}_{n=1}^\infty$ and $\big\{\beta_{p,q,n}/\beta_{2,q,n}\big\}_{n=1}^\infty$ are proportional, so by the claim the latter is also nondecreasing. Thus, for every $n\geq1$, $p\geq2$ and real numbers $a_1,\ldots,a_n$, \eqref{eq:usecor3} yields that
$$A_p \Big\|\sum_{i=1}^n a_i X_i\Big\|_2 \leq \Big\|\sum_{i=1}^n a_iX_i\Big\|_p \leq B_p \Big\|\sum_{i=1}^n a_i X_i\Big\|_2,$$
where
$$A_p = \inf_{n\geq1} \frac{\|X_1^{(n)}\|_p}{\|X_1^{(n)}\|_2} = \frac{\|X_1^{(1)}\|_p}{\|X_1^{(1)}\|_2} = \frac{3^{1/2}}{(p+1)^{1/p}}$$
and
$$B_p = \gamma_p\cdot\sup_{n\geq1}\frac{\beta_{p,q,n}}{\beta_{2,q,n}} = \gamma_p\cdot\lim_{n\to\infty}\frac{\beta_{p,q,n}}{\beta_{2,q,n}} = \gamma_p,$$
as can be checked using \eqref{eq:beta} and Stirling's formula. The optimality of these constants follows from the sharpness of Theorems \ref{thm:khintchexp-edge} and \ref{thm:khintchexp-gauss}. 
 The proof for $p\in[1,2]$ works with the obvious adaptations.
\hfill$\Box$

\smallskip

\noindent {\it Proof of the claim.} Fix $p,q\geq2$ and for every $n\geq1$, denote $Y_n = X_1^{(n)}/\big\|X_1^{(n)}\big\|_2$. The $Y_n$ are symmetric unimodal random variables with densities of the form $f_n(x)=c_n (M_n-|x|^q)^{\frac{n-1}{q}}_+$ and an argument identical to the one used in Lemma \ref{lm:singlecomparisonp} shows that the graphs of $f_n$ and $f_{n+1}$ intersect exactly twice on $(0,\infty)$. Therefore, to prove that $\|Y_n\|_p\leq\|Y_{n+1}\|_p$, it suffices to prove that the sign pattern of $f_n-f_{n+1}$ is $(-,+,-)$ or, equivalently, that $M_n<M_{n+1}$. An elementary computation involving the beta function shows that
\begin{equation}
M_n = \frac{1}{\|X_1^{(n)}\|_2^q} = \left(\frac{\Gamma\big(\frac{1}{q}\big)}{\Gamma\big(\frac{3}{q}\big)}\cdot\frac{\Gamma\big(\frac{n}{q}+1+\frac{2}{q}\big)}{\Gamma\big(\frac{n}{q}+1\big)}\right)^{q/2},
\end{equation}
thus the proof will be complete once we prove that the function
$$\rho(x) = \frac{\Gamma(x+s)}{\Gamma(x+1)}, \ \ \ x\in(0,\infty)$$
is strictly increasing for $s=1+\frac{2}{q}>1$. It is well known that $\eta(x)=\log\Gamma(x)$ is strictly convex on $(0,\infty)$, hence
$(\log\rho)'(x) = \eta'(x+s)-\eta'(x+1) >0,$
since $s>1$, and the claim follows.
\hfill$\Box$


\section{Further remarks on uniform random variables}\label{sec:remarks}

The technique used to prove Theorems \ref{thm:khintchexp-edge} and \ref{thm:khintchexp-gauss} also provides a new proof of the result of Lata\l a and Oleszkiewicz from \cite{LO} which we shall now present. Fix $n\geq1$ and let $U_1,\ldots,U_n$ be independent random variables uniformly distributed on $[-1,1]$. The main result of \cite{LO} is that $\big\|\sum_{i=1}^n a_i U_i\Big\|_p$ as a function of $(a_1^2,\ldots,a_n^2)$ is Schur convex (resp. concave) for every $1 \leq p \leq 2$ (resp. $p \geq 2$). See \cite{MO} for further background on the Schur ordering. In particular, when, say, $p \geq 2$, the $p$-th moment $\Big\|\sum_{i=1}^n a_i U_i\Big\|_p$, as $(a_1,\ldots,a_n)$ varies over all unit vectors, is maximised for $a = (1/\sqrt{n},\ldots,1/\sqrt{n})$ and minimised for $a = (1,0,\ldots,0)$.

For $\lambda\in[0,1]$, let $X_\lambda=\sqrt{\lambda}U_1+\sqrt{1-\lambda}U_2$. The crux of the argument presented in \cite{LO} is the fact that for every symmetric unimodal random variable $V$ independent of the $U_i$ we have
\begin{equation} \label{eq:xlambda}
\E|X_\lambda+V|^p \leq \E|X_{\lambda'}+V|^p, \quad 0<\lambda<\lambda'<1/2,
\end{equation}
for $p \geq 2$ and the reverse for $1 \leq p \leq 2$. Then, Schur convexity follows by a standard argument based on Muirhead's lemma (see \cite[Lemma~B.1]{MO}). We shall sketch a different proof of this inequality, based on the idea of ``well intersecting'' densities described in the introduction and used in the proof of Lemma \ref{lm:singlecomparisonp}.

\smallskip

\noindent {\it A new proof of \eqref{eq:xlambda}.} Let $f_\lambda$ be the density of $X_\lambda$ and $h(x) = \E_V|\sqrt{x}+V|^p$. Since $V$ is a mixture of uniform random variables (Lemma \ref{lm:unimodmix}), it follows from Lemma \ref{lm:conv1/p} that this function is convex for $p \geq 2$ and concave for $1 \leq p \leq 2$. By symmetry, $$\E h(X_\lambda^2) = \E||X_\lambda|+V|^p = \E|X_\lambda+V|^p,$$ thus we want to show that $\E h(X_\lambda^2)\leq \E h(X_{\lambda'}^2)$ or, equivalently, that
\begin{equation} \label{eq:goallatole}
\int_0^\infty h(x^2)\big(f_{\lambda'}(x)-f_{\lambda}(x)\big)\dd x \geq 0.
\end{equation}
Since $\E X_\lambda^2 = \E U_1^2$ does not depend on $\lambda$, we can modify $h(x^2)$ in the integrand by any function of the form $\alpha x^2 + \beta$, writing
\[
\int_0^\infty h(x^2)\big(f_{\lambda'}(x)-f_{\lambda}(x)\big)\dd x = \int_0^\infty \big(h(x^2)-\alpha x^2-\beta\big)\cdot\big(f_{\lambda'}(x)-f_{\lambda}(x)\big)\dd x
\] 
The only technical part of the argument is to check that $f_{\lambda'}-f_\lambda$ changes sign exactly twice on $(0,\infty)$, say at $0 < A < B$ and that it is positive on $(0,A)$, negative on $(A,B)$ and nonnegative on $(B,\infty)$, yet this is elementary to check since both densities are trapezoidal with the same second moment. Having this, we finish as in the proof of Lemma \ref{lm:singlecomparisonp}: we choose $\alpha$ and $\beta$ to match the sign changes of the function $h(x^2) - \alpha x^2 - \beta$, so that the integrand is nonnegative and \eqref{eq:goallatole} follows.
\hfill$\Box$

\smallskip

We remark that for uniform random variables, both the approach from \cite{LO} and the one presented here break down for $p\in(-1,1)$. This is because the functions appearing in Lemma \ref{lm:conv1/p} fail to be convex or concave when $p$ is in this range. Nevertheless, uniform random variables satisfy the conclusion of Theorem \ref{thm:khintchexp-edge} for $p\in(-1,1)$, as shown by the following simple argument.

\begin{proposition} \label{prop:p<1}
Fix $p\in(-1,2)$ and $n\geq1$. If $U_1,\ldots,U_n$ are i.i.d. symmetric uniform random variables, then for every unit vector $(a_1,\ldots,a_n)$ we have
\begin{equation} \label{eq:p<1}
\Big\|\sum_{i=1}^n a_iU_i\Big\|_p \leq \|U_1\|_p.
\end{equation}
\end{proposition}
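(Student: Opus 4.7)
The plan is to exploit the unimodality of the sum. By Lemma \ref{lm:unimodsum}, each summand $a_iU_i$ is symmetric unimodal, hence $S=\sum_{i=1}^n a_iU_i$ is symmetric unimodal as well. Applying Lemma \ref{lm:unimodmix}, we can therefore write $S=RU$ in distribution, where $R \geq 0$ is some random variable and $U$ is uniform on $[-1,1]$, independent of $R$.

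First I would compute both sides of the desired inequality in terms of $R$. By independence, for $p\in(-1,\infty)$,
\begin{equation*}
\E|S|^p = \E R^p \cdot \E|U|^p = \frac{\E R^p}{p+1}, \qquad \E|U_1|^p = \frac{1}{p+1},
\end{equation*}
so the inequality \eqref{eq:p<1} is equivalent to $\E R^p \leq 1$ when $p\in(0,2)$ and to $\E R^p \geq 1$ when $p\in(-1,0)$ (the sign flip coming from raising to the power $1/p$). The case $p=0$ follows by continuity.

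Next I would pin down the second moment of $R$. Since $\E S^2 = \sum_{i=1}^n a_i^2\,\E U_i^2 = \frac{1}{3}$ (using $\|a\|_2=1$) and independently $\E S^2 = \E R^2 \cdot \E U^2 = \frac{\E R^2}{3}$, we obtain the key normalisation
\begin{equation*}
\E R^2 = 1.
\end{equation*}

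With this identity in hand, the conclusion is one application of Jensen's inequality. Writing $\E R^p = \E (R^2)^{p/2}$, we observe that $x\mapsto x^{p/2}$ is concave on $(0,\infty)$ for $p\in(0,2)$ and convex for $p\in(-1,0)$. Therefore
\begin{equation*}
\E R^p = \E (R^2)^{p/2} \leq (\E R^2)^{p/2} = 1 \quad (p\in(0,2)), \qquad \E R^p \geq 1 \quad (p\in(-1,0)),
\end{equation*}
which is exactly what was needed. There is no substantial obstacle in this argument; the only point requiring care is the sign of $p$ when translating between $\E|S|^p$ and $\|S\|_p=(\E|S|^p)^{1/p}$. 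Note that this strategy relies crucially on $\E|U|^p$ being finite (hence the constraint $p>-1$) and on the unimodality of $S$, which allows the mixture representation $S=RU$.
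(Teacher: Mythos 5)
Your proof is correct and follows essentially the same route as the paper: both use Lemmas \ref{lm:unimodsum} and \ref{lm:unimodmix} to write $S = RU$ with $\|R\|_2=1$, then conclude $\|S\|_p = \|R\|_p\|U_1\|_p \leq \|R\|_2\|U_1\|_p$. The only difference is that you spell out the inequality $\|R\|_p \leq \|R\|_2$ via Jensen applied to $x \mapsto x^{p/2}$ with attention to the sign of $p$, whereas the paper invokes the monotonicity of $L^p$-norms directly.
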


\begin{proof}
By Lemmas \ref{lm:unimodmix} and \ref{lm:unimodsum}, there exists a positive random variable $R$ such that $\sum_{i=1}^n a_i U_i$ has the same distribution as $RU_1$. Since $\|U_1\|_2=\|\sum_{i=1}^n a_iU_i\|_2 = \|R\|_2\|U_1\|_2$, we have $\|R\|_2=1$. Therefore, for $p\in(-1,2)$ we have
$$\Big\|\sum_{i=1}^n a_iU_i\Big\|_p = \|RU_1\|_p = \|R\|_p \|U_1\|_p\leq \|R\|_2 \|U_1\|_p = \|U_1\|_p,$$
which completes the proof.
\end{proof}

It is evident from the proof of Proposition \ref{prop:p<1} that an analogue of Lemma \ref{lm:unimodsum} about sums of random variables with density proportional to $e^{-|x|^q}$, $q\in(2,\infty)$, instead of uniforms would extend Theorem \ref{thm:khintchexp-edge} to all $p\in(-1,\infty)$ and $q\in(2,\infty]$. We refer to \cite{MOU} for more on distributions having this property.

We conclude this section with the proof of Theorem \ref{thm:Phi}. According to Lemma \ref{lm:unimodmix}, inequality \eqref{eq:Phi} is equivalent to the validity of
\begin{equation}\label{eq:with-R}
	\E \Phi\Big( \sum_{i=1}^n R_i U_i \Big) \geq \E \Phi\Big( \sum_{i=1}^n \sigma_i U_i \Big),
\end{equation}
where $U_1,\ldots,U_n$ are arbitrary independent symmetric uniform random variables and $R_1,\ldots,R_n$ are independent positive random variables, independent of $U_i$ satisfying $\E R_i^2=\sigma_i^2$. By Jensen's inequality, \eqref{eq:with-R} is equivalent to the coordinatewise convexity of the function
\begin{equation}
	H(x_1,\ldots,x_n)=  \E \Phi\Big( \sum_{i=1}^n \sqrt{x_i} U_i \Big), \qquad x_1,\ldots,x_n > 0.
\end{equation}
We claim that this is equivalent to the convexity of 
\begin{equation}
	h(x) = h_{U_1, U_2}(x)= \E_{U_1,U_2} \Phi(\sqrt{x} U_1 +U_2), \qquad x > 0,
\end{equation}
where $U_1,U_2$ are arbitrary independent symmetric uniform random variables. Indeed, one direction is clear as $h(x)=H(x,1,0,\ldots,0)$. To prove that $H$ is convex in $x_1$ assuming the convexity of $h$, it suffices to write $S=\sum_{i=2}^n \sqrt{x_i} U_i$ in the form $S = R U_2$ (using Lemmas \ref{lm:unimodmix} and \ref{lm:unimodsum}), where $U_2$ is some uniform symmetric random variable, and $R$ is positive, independent of $U_2$. Then, we have
$$H(x_1,\ldots,x_n) = \E_R \E_{U_1,U_2} \Phi(\sqrt{x_1}U_1 + RU_2) = \E_R\E_{U_1,U_2} h_{U_1,RU_2}(x_1),$$
 which is a mixture of convex functions. As a result, Theorem \ref{thm:Phi} is a consequence of the following elementary observation.

\begin{lemma}\label{lm:Phi}
Let $\Phi:\R\to\R$ be an even function of class $C^3$. Then, the function
\begin{equation}
h(x) = \int_{-b}^b\int_{-a}^a\Phi(u+\sqrt{x}v) \dd u \dd v
\end{equation}
is convex on $[0,+\infty)$ for every $a,b>0$ if and only if $\Phi'''(x)\geq0$ for every $x\in[0,\infty)$.
\end{lemma}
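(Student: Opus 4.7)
The plan is to compute $h''(x)$ explicitly, reduce convexity to a single one-variable inequality, and settle both directions by an elementary derivative cascade. Set $g(y)=\int_{-a}^{a}\Phi(u+y)\,\dd u$, which is even in $y$ by evenness of $\Phi$. Using this evenness I rewrite
\[
h(x) = \int_{-b}^{b} g(\sqrt{x}\,v)\,\dd v = 2\int_0^b g(\sqrt{x}\,v)\,\dd v = \frac{2}{\sqrt{x}}\,G(b\sqrt{x}), \qquad G(y):=\int_0^y g(r)\,\dd r,
\]
and a direct two-fold differentiation gives, with $y:=b\sqrt{x}$,
\[
h''(x) = \frac{b^5}{2y^5}\,\Psi_a(y), \qquad \Psi_a(y):=3G(y)-3yg(y)+y^2 g'(y).
\]
Thus $h$ is convex on $[0,\infty)$ for every $a,b>0$ if and only if, for every $a>0$, $\Psi_a(y)\ge 0$ for every $y>0$ (since as $b>0$ and $x>0$ vary, $y=b\sqrt{x}$ sweeps $(0,\infty)$).

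For the sufficiency direction, assume $\Phi'''\ge 0$ on $[0,\infty)$; then $\Phi''$ is even and nondecreasing on $[0,\infty)$. Noting $\Psi_a(0)=0$, two differentiations telescope:
\[
\Psi_a'(y)=y\bigl(yg''(y)-g'(y)\bigr)=:yQ_a(y), \qquad Q_a(0)=0, \qquad Q_a'(y)=yg'''(y)=y\bigl[\Phi''(y+a)-\Phi''(y-a)\bigr].
\]
For $y\ge 0$ one has $y+a\ge |y-a|$, so by monotonicity and evenness of $\Phi''$, $\Phi''(y+a)\ge\Phi''(y-a)$, giving $Q_a'\ge 0$; hence $Q_a\ge 0$, $\Psi_a'\ge 0$, and finally $\Psi_a\ge 0$ on $[0,\infty)$.

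For the necessity direction, I would fix $a>0$ and Taylor-expand $\Psi_a$ at $y=0$. Since $g$ is even, $g(y)=g(0)+\tfrac{g''(0)}{2}y^2+\tfrac{g^{(4)}(0)}{24}y^4+O(y^6)$; substituting into $\Psi_a$, the coefficients of $y$ and $y^3$ cancel identically (via $3-3=0$ and $\tfrac12-\tfrac32+1=0$), leaving $\Psi_a(y)=\frac{g^{(4)}(0)}{15}y^5+O(y^7)$. As $\Phi'''(0)=0$ (since $\Phi'''$ is odd), evenness gives $g^{(4)}(0)=2\int_0^a\Phi^{(4)}(u)\,\dd u=2\Phi'''(a)$, so $\Psi_a\ge 0$ near $0$ forces $\Phi'''(a)\ge 0$; since $a>0$ is arbitrary, this completes the reverse implication.

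The main obstacle is spotting the clean reduction: the three-term expression $\Psi_a=3G-3yg+y^2g'$ looks opaque at first glance, but two differentiations collapse it to the transparent form $yg'''(y)$, after which the hypothesis on $\Phi'''$ applies directly through a simple monotonicity-plus-evenness argument. The Taylor-expansion step for necessity is mechanical but requires careful bookkeeping of the two orders of cancellation before the leading term involving $\Phi'''(a)$ emerges.
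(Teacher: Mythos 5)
Your proof is correct and runs parallel to the paper's, though with a different computational organization. The paper works with $h'$ rather than $h''$: for sufficiency it reduces to showing $y\mapsto \frac{1}{y}\int_{-1}^1\Phi'(y+t)\,\dd t$ is nondecreasing, which follows by monotonicity of slopes of a convex function vanishing at $0$; for necessity it subtracts from $h$ an affine-in-$x$ modification $H_{a,b}$, computes $\lim_{b\to0^+}b^{-5}H_{a,b}(x)=\frac{1}{30}x^2\Phi'''(a)$, and invokes stability of convexity under pointwise limits. Your route computes $h''$ directly via the substitution $y=b\sqrt{x}$, arriving at the clean one-variable inequality $\Psi_a(y)\geq 0$, then uses a two-stage derivative cascade and a local Taylor expansion of $\Psi_a$ at $y=0$ in place of the $b\to0^+$ limit. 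Both approaches rest on the same core facts, namely that $\Phi''$ even and nondecreasing on $[0,\infty)$ yields $\Phi''(y+a)\geq \Phi''(y-a)$ for $y,a\geq 0$, and that the leading nontrivial term in the relevant expansion is proportional to $\Phi'''(a)$.

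One small regularity point is worth tidying: you invoke $\Phi^{(4)}$ in writing $g^{(4)}(0)=2\int_0^a\Phi^{(4)}(u)\,\dd u$, but $\Phi$ is only assumed $C^3$. This is easily repaired without $\Phi^{(4)}$: from $g'(y)=\Phi(y+a)-\Phi(y-a)$ one gets $g^{(4)}(y)=\Phi'''(y+a)-\Phi'''(y-a)$ (so $g\in C^4$), and $g^{(4)}(0)=\Phi'''(a)-\Phi'''(-a)=2\Phi'''(a)$ by oddness of $\Phi'''$. Correspondingly, the remainder in your Taylor expansion should be $o(y^5)$ rather than $O(y^7)$, since that is all $g\in C^4$ provides; this does not affect the conclusion, as $\Psi_a(y)=\frac{2\Phi'''(a)}{15}y^5+o(y^5)\geq 0$ still forces $\Phi'''(a)\geq 0$.
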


\begin{proof}
Suppose that $\Phi'''(x)\geq0$ for every $x\in[0,\infty)$. To show the convexity of $h$ observe that since
\[
h'(x) = \int_{-b}^b \frac{\Phi(a+\sqrt{x}v)-\Phi(-a+\sqrt{x}v)}{2\sqrt{x}}v\dd v,\]
by a simple rescaling and homogeneity, it is enough to show that the function
\begin{equation}
y \longmapsto \frac{\Phi(y+1)-\Phi(y-1)}{y} = \frac{1}{y}\int_{-1}^1\Phi'(y+t)\dd t
\end{equation}
is nondecreasing on $(0,\infty)$. This follows by the monotonicity of slopes, because the function $y \mapsto \int_{-1}^1 \Phi'(y+t) \dd t $ vanishes at $y=0$ and is convex (as can easily be seen by observing that $\Phi'''$ is odd and distinguishing cases $y>1$ and $y\in(0,1)$).

To show the converse, consider $H_{a,b}(x) = \int_{-b}^b f(a,x,v) \dd v$, where
\begin{align*}
 f(a,x,v) =
\int_{-a}^a \Big(\Phi(u+\sqrt{x}v)-\Phi(u)-\sqrt{x}v\Phi'(u)-\frac{1}{2}xv^2\Phi''(u)-\frac{1}{6}x^{3/2}v^3\Phi'''(u)\Big)
\dd u.
\end{align*}
Since $H_{a,b}(x)$ differs from $h(x)$ by an affine function, $H_{a,b}$ is also convex on $[0,\infty)$. Note that $v \mapsto f(a,x,v)$ is an even function and satisfies
$f(a,x,0) = \frac{\partial}{\partial v}f(a,x,0) = \frac{\partial^2}{\partial v^2}f(a,x,0) = \frac{\partial^3}{\partial v^3}f(a,x,0) = 0$ and $\frac{\partial^4}{\partial v^4}f(a,x,0) = 2x^2\Phi'''(a)$. Therefore, we find that
\[
\lim_{b\to 0^+} \frac{1}{b^5}H_{a,b}(x) =\frac{2}{5}\lim_{b\to0^+} \frac{f(a,x,b)}{b^4} =  \frac{1}{30}x^2\cdot \Phi'''(a)\]
and we know this is a convex function of $x$ on $[0,\infty)$ for every $a \geq 0$ as a pointwise limit of convex functions. Thus, $\Phi'''(a) \geq 0$ for every $a \geq 0$. Changing $\Phi$ to $-\Phi$ proves the opposite statement.
\end{proof}

\begin{remark} \label{rem:regularity}
The proof of Theorem \ref{thm:Phi} shows that a sufficient condition for \eqref{eq:Phi} to hold is that the function $\Phi$ is only of class $C^1$ with $\Phi'$ being convex on $[0,\infty)$. Therefore, choosing $X_i=\sigma_i G_i$ to be Gaussian random variables with variances $\sigma_i^2$ and $\Phi(x)=|x|^p$, $p\geq2$, shows that for every real scalars $\sigma_1,\ldots,\sigma_n$,
\begin{equation}\E\Big|\sum_{i=1}^n \sigma_i U_i\Big|^p \leq \E\Big|\sum_{i=1}^n \sigma_i G_i\Big|^p = \gamma_p^p \Big(\sum_{i=1}^n \sigma_i^2\Big)^{p/2}.
\end{equation}
The same argument also gives the Gaussian optimal constant when $p\in(1,2)$, yet it does not work for $p<1$ due to the lack of the differentiability of $\Phi(x)=|x|^p$ at 0. 
\end{remark}


\section{Moment comparison for symmetric log-concave functions} \label{sec:momentcomparison}

In this section we shall present the proof of Theorem \ref{moments-main}. In Subsection \ref{4.1} we describe some properties of the families $\mc{L}_n^\pm$. We shall need those properties in particular for the proof of Theorem \ref{moments-main}{\it(ii)}. In Subsection 4.2 we formulate and prove two rather standard topological facts concerning Euclidean balls. In Subsection \ref{4.3} we introduce the main ingredients needed for the inductive proof of Theorem \ref{moments-main}\emph{(i)}. We also establish some technical preparatory facts. In Subsection 4.4 we formulate and prove Theorem \ref{A=B=C}, which can be seen as a strengthening of Theorem \ref{moments-main}\emph{(i)} needed for our induction-based argument to work. Finally, we prove Theorem \ref{moments-main}.   


\subsection{Properties of $\mc{L}_n^\pm$}\label{4.1}

The following three elementary lemmas are crucial for the arguments presented in this subsection.

\begin{lemma}\label{l1}
Suppose that $a_1,\ldots, a_n$, $b_1,\ldots,b_n$ are real numbers. Then the function
\begin{equation}
	h(t) = a_1 t^{b_1}+ \cdots + a_n t^{b_n} 
\end{equation}
is either identically zero or it has at most $n-1$ zeroes in the interval $(0,\infty)$. Moreover, if $h$ has exactly $n-1$ zeroes in $(0,\infty)$, then every zero is a sign change point of $h$.
\end{lemma}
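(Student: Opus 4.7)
The plan is to proceed by induction on $n$, combining Rolle's theorem with a simple factoring step to reduce the number of terms. First I would normalise: we may assume that the exponents $b_1 < b_2 < \cdots < b_n$ are distinct (if two coincide, combine the corresponding monomials to reduce the number of terms, and the statement becomes stronger) and that every coefficient $a_i$ is nonzero (otherwise we again have fewer terms). The base case $n=1$ is immediate, since $a_1 t^{b_1}$ has no zero on $(0,\infty)$ when $a_1 \neq 0$, and the ``moreover'' clause is vacuous.

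For the inductive step, factor out the lowest power: writing $g(t)=t^{-b_1}h(t)=a_1+\sum_{i=2}^n a_i t^{b_i-b_1}$, the zeros and sign-change points of $h$ on $(0,\infty)$ coincide with those of $g$, since $t^{b_1}>0$. Now
\begin{equation*}
g'(t)=\sum_{i=2}^n a_i(b_i-b_1)t^{b_i-b_1-1}
\end{equation*}
is again a generalised polynomial, with $n-1$ nonzero terms and distinct exponents. By the inductive hypothesis, $g'$ has at most $n-2$ zeros in $(0,\infty)$. If $g$ had $m$ zeros in $(0,\infty)$, Rolle's theorem would produce $m-1$ strictly interior zeros of $g'$, giving $m-1\leq n-2$, i.e.\ $m\leq n-1$.

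For the second assertion, suppose $g$ has exactly $n-1$ zeros on $(0,\infty)$ but some zero $t_0$ is not a sign-change point. Since $g$ is smooth and does not change sign at $t_0$, the point $t_0$ is a local extremum of $g$, and therefore $g'(t_0)=0$. The $n-2$ Rolle zeros of $g'$ lie \emph{strictly} between consecutive zeros of $g$, so $t_0$ is not among them. Hence $g'$ would have at least $(n-2)+1=n-1$ zeros, contradicting the inductive bound. So every zero of $g$ must be a sign change, which transfers to $h$ via the factor $t^{b_1}$. The routine part is the Rolle count; the only point that needs care is the sign-change clause, where one must observe that a non-sign-change zero of a smooth function is automatically a zero of the derivative and is disjoint from the strictly interior zeros supplied by Rolle's theorem.
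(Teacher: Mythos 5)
Your proof is correct and follows essentially the same route as the paper: induction on $n$, factoring out $t^{b_1}$ to produce a constant term, applying Rolle's theorem to the derivative (which has one fewer term) for the zero count, and observing that a non-sign-change zero is a local extremum, hence a zero of the derivative disjoint from the strictly interior Rolle zeros. The only cosmetic difference is that you normalise explicitly to distinct exponents and nonzero coefficients up front, while the paper handles this by assuming $h$ is not a monomial.
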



\begin{proof}
For the proof of the first statement we proceed by induction on $n$. The statement is trivial for $n=1$. Assume that the assertion is true for some $n-1$ and, without loss of generality, that 
\[
	h(t)=a_1 t^{b_1}+ \cdots  + a_n t^{b_n}
\]
is not of the form $h(t)=at^b$. The equation $h(t)=0$ is equivalent to $\tilde{h}(t)=0$ where 
\[
	\tilde{h}(t)= a_1 + a_{2} t^{b_2-b_1}+ \cdots  + a_{n} t^{b_{n}-b_1}
\]
is non-constant. To prove our assertion by contradiction, suppose that the latter has more than $n-1$ solutions in $(0,\infty)$. Then, Rolle's theorem shows that the function
\[
	\tilde{h}'(t) = (b_2-b_1) a_{2} t^{b_2-b_1-1}+ \cdots + (b_n-b_1) a_n t^{b_n-b_1-1},
\]
which is not identically zero, has at least $n-1$ zeros. This contradicts the inductive hypothesis. 

For the second part let us assume, by contradiction, that there is a point $t_\star>0$ such that $h(t_\star)=0$, but $t_\star$ is a local extremum for $h$. In particular, the function $h$ is not of the form $h(t)=at^b$. Then, the function $\tilde{h}$ defined above has exactly $n-1$ zeroes in $(0,\infty)$ and $t_\star$ is a local extremum of $\tilde{h}$. Therefore, by Rolle's theorem $\tilde{h}'$ has $n-2$ zeroes lying strictly between the zeroes of $\tilde{h}$ and additional one at $t_\star$. This means that $\tilde{h}'$
has at least $n-1$ zeroes in $(0,\infty)$, which contradicts the first part of the lemma.
\end{proof}

The formulation of the next lemma appeared as Problem 76 in \cite{PSz}. We include its proof for completeness.  

\begin{lemma} \label{l2}
For any real numbers $p_1 < p_2 \ldots < p_n$ and $0<t_1 < t_2 \ldots < t_n$ the determinant of the matrix $A=\left(t_i^{p_j}\right)_{i,j=1}^n$ is positive. 
\end{lemma}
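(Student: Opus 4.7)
The plan is to prove Lemma \ref{l2} by induction on $n$, using Lemma \ref{l1} to control the zeros of the determinant viewed as a generalized polynomial in one of the variables.

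The base case $n=1$ is immediate since $t_1^{p_1} > 0$. For the inductive step, I would fix $t_1 < \cdots < t_{n-1}$ and $p_1 < \cdots < p_n$ and consider the one-variable function $D(t) = \det\bigl(t_i^{p_j}\bigr)_{i,j=1}^n$ where we let the last row be $(t^{p_1}, \ldots, t^{p_n})$, so that $t$ plays the role of $t_n$. Expanding along this last row yields
\[
D(t) = \sum_{j=1}^n (-1)^{n+j} M_{nj}\, t^{p_j},
\]
where each minor $M_{nj}$ depends only on $t_1,\ldots,t_{n-1}$ and $p_1,\ldots,\widehat{p_j},\ldots,p_n$. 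The coefficient of $t^{p_n}$ is $M_{nn}$, which is the determinant of the matrix $(t_i^{p_j})_{i,j=1}^{n-1}$, strictly positive by the inductive hypothesis. In particular $D$ is not identically zero, so by Lemma \ref{l1} it has at most $n-1$ zeros in $(0,\infty)$.

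Next I would locate those zeros: substituting $t = t_i$ for $i = 1,\ldots,n-1$ makes the last row of the matrix coincide with the $i$-th row, so $D(t_i) = 0$. This accounts for exactly $n-1$ distinct zeros of $D$ in $(0,\infty)$, hence by the second statement of Lemma \ref{l1} each of them is a sign change point of $D$. Since the leading behaviour as $t \to \infty$ is $D(t) \sim M_{nn}\, t^{p_n}$ and $M_{nn} > 0$, we have $D(t) > 0$ on $(t_{n-1},\infty)$; in particular $D(t_n) > 0$ since $t_n > t_{n-1}$, which completes the induction.

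The only slightly delicate point is ensuring that the $n-1$ prescribed zeros exhaust all zeros of $D$ and are genuine sign changes (not tangencies), since without this one could not conclude positivity on the last interval just from the sign at infinity. This is precisely what the second half of Lemma \ref{l1} provides once we know $D$ has the maximum possible number $n-1$ of zeros and is not the trivial monomial. The rest is a direct computation and the straightforward reduction to the inductive hypothesis via the cofactor $M_{nn}$.
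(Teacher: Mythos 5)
Your proof is correct and follows essentially the same route as the paper's: expand the determinant along the row in $t_n$, use Lemma~\ref{l1} to bound the number of zeros of the resulting exponential polynomial, identify the forced zeros at $t_1,\ldots,t_{n-1}$, and then read off the positive sign from the leading behaviour at infinity, where the coefficient is the $(n-1)\times(n-1)$ minor handled by the inductive hypothesis. One small remark: the appeal to the second statement of Lemma~\ref{l1} (that all $n-1$ zeros are sign changes) is not actually needed for the conclusion you draw, since once you know $D$ is zero-free on $(t_{n-1},\infty)$ and tends to $+\infty$, continuity alone already gives $D>0$ there; this is in fact the shape of the paper's argument, which first shows $\det A\neq 0$ for all admissible $n$-tuples and then deduces constant sign on $(t_{n-1},\infty)$ without discussing sign changes.
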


\begin{proof} 
We first show that $\det(A) \ne 0$. To prove it by contradiction, assume that the matrix $A$ is singular and take a non-zero vector $c=(c_1,\ldots,c_n)$ such that $Ac=0$. Thus, if $f$ is given by
\begin{equation}
	f(t) = \sum_{j=1}^n c_j t^{p_j}, \qquad t>0,
\end{equation}
we have $f(t_i)=0$ for every $i=1,\ldots,n$. Since some of the $c_i$ are non-zero, the function $f$ is not identically zero,  which contradicts Lemma \ref{l1}. 

To prove that the sign of $\det(A)$ is positive we proceed by induction. The assertion is clear for $n=1$. From the first part we deduce that the function 
\[
	(t_{n-1},\infty) \ni t_n \mapsto \det\left( \left(t_i^{p_j}\right)_{i,j=1}^n \right)
\]
has constant sign. It therefore suffices to check the sign in the limit $t_n \to \infty$. Expanding the determinant with respect to the last row we get
\[
	\lim_{t_n \to \infty} \frac{1}{t_n^{p_n}} \det\left( \left(t_i^{p_j}\right)_{i,j=1}^n \right) = \det(\left(t_i^{p_j}\right)_{i,j=1}^{n-1}),
\]
which is positive by induction hypothesis. This completes the proof.
\end{proof}

For $n\geq1$ let us define the moment map $\Psi_n: \{f: f \geq 0\} \to [0,\infty]^n$ given by
\begin{equation} \label{eq:Psi}
	\Psi_n(f) = (m_1(f),\ldots, m_{n}(f)), \qquad \textrm{where} \  m_i(f) = \int_{0}^\infty t^{p_i}f(t) \dd t,
\end{equation}
for every $i\in\{1,\ldots,n\}$.

\begin{lemma}\label{l3}
Suppose that $f,g:[0,\infty)\to\R_+$ are two measurable functions such that $f-g$ changes sign at most $n-1$ times on $(0,\infty)$. If $\Psi_n(f)=\Psi_n(g)$, then $f=g$ a.e.
\end{lemma}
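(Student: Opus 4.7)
The plan is to argue by contradiction: assume that $f \neq g$ on a set of positive measure, so $f - g$ changes sign at exactly some points $0 < t_1 < \ldots < t_k$ with $0 \leq k \leq n - 1$, and then construct a ``test polynomial'' $h$ out of the monomials $t^{p_i}$ whose sign pattern matches that of $f - g$ on $(0,\infty)$.

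First I would dispose of the degenerate case $k = 0$: then $f - g$ has a fixed sign on $(0,\infty)$ and the single moment identity $\int_0^\infty t^{p_1}(f-g) = 0$ forces $f = g$ a.e.

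For the generic case $k \geq 1$, the key idea is to pick any $k+1$ of the $n$ available exponents, say $p_{i_1} < \ldots < p_{i_{k+1}}$ (which is possible since $k+1 \leq n$), and to solve the linear system
\[
\sum_{j=1}^{k+1} c_j\, t_\ell^{\,p_{i_j}} = 0, \qquad \ell = 1, \ldots, k,
\]
of $k$ equations in $k+1$ unknowns $c_1,\ldots,c_{k+1}$; a nontrivial solution exists trivially. Setting $h(t) = \sum_{j=1}^{k+1} c_j t^{p_{i_j}}$, the first clause of Lemma \ref{l1} (applied with $k+1$ terms) shows that $h \not\equiv 0$ and has at most $k$ zeros on $(0,\infty)$, so its zero set is precisely $\{t_1,\ldots,t_k\}$; the second clause of Lemma \ref{l1} then guarantees that $h$ actually changes sign at each $t_\ell$, not merely touches zero there.

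From here the argument is formal. Because $h$ and $f - g$ have exactly the same sign-change points on $(0,\infty)$, replacing $h$ by $-h$ if necessary we may assume $h(t)(f(t) - g(t)) \geq 0$ on all of $(0,\infty)$. The hypothesis $\Psi_n(f) = \Psi_n(g)$ then gives
\[
\int_0^\infty h(t)\bigl(f(t)-g(t)\bigr)\,\mathrm{d}t \;=\; \sum_{j=1}^{k+1} c_j \int_0^\infty t^{p_{i_j}}\bigl(f(t)-g(t)\bigr)\,\mathrm{d}t \;=\; 0,
\]
so the nonnegative integrand $h(f-g)$ vanishes a.e. Since $h$ has only finitely many zeros, this forces $f = g$ a.e., contradicting our initial assumption.

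The one genuinely nontrivial step is the construction of $h$ with exactly the right sign pattern, and this is where Lemma \ref{l1} does all the work: without both the root-counting bound and the ``every root is a sign-change'' addendum, we could not rule out stray additional zeros of $h$, nor could we rule out tangential zeros, either of which would destroy the constant-sign conclusion for $h(f-g)$. Everything else is bookkeeping.
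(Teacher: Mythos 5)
Your proof is correct and takes essentially the same route as the paper: construct a linear combination $h$ of the available powers that vanishes exactly at the sign-change points $t_1,\ldots,t_k$, use Lemma~\ref{l1} to conclude these are the only zeros and each is a genuine sign change, and then integrate $h(f-g)\geq 0$ against the vanishing moments. The paper normalises the coefficient of the $(k+1)$-st power to $1$ and uses Lemma~\ref{l2} to solve the resulting $k\times k$ system uniquely, whereas you invoke rank-nullity on the homogeneous $k\times(k+1)$ system; up to scaling this produces the same $h$. One small misattribution: Lemma~\ref{l1} does not itself ``show $h\not\equiv 0$''---it only gives the dichotomy ``$h\equiv 0$ or $h$ has at most $k$ zeros.'' You need the separate (easy) fact that a nontrivial linear combination of distinct power functions is not identically zero; this is precisely what Lemma~\ref{l2} encodes, and it is what the paper's normalisation $c_{k+1}=1$ together with Lemma~\ref{l2} handles directly. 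Your conclusion is nonetheless correct once this is supplied.
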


\begin{proof}
Suppose that $f-g$ changes sign at some points $0<t_1<t_2<\ldots<t_{k}$, where $k \leq n-1$. For real numbers $c_1,\ldots,c_{k}$ consider the function
\begin{equation}
	h(t) = t^{p_{k+1}} + \sum_{i=1}^{k} c_i t^{p_i}.
\end{equation}
Using Lemma \ref{l2}, we see that it is possible to find $c_1,\ldots,c_{k}\in\R$ such that $h(t_i)=0$ for every $i=1,\ldots,k$ (since this involves solving a linear system of equations whose determinant is non-zero). From Lemma \ref{l1}, for this choice of $c_1,\ldots,c_{k}$, the function $h$ has exactly $k$ roots in $(0,\infty)$ and each root corresponds to a sign change of $h$. Therefore, the function $h(f-g)$ has a fixed sign. However, since $\Psi_n(f)=\Psi_n(g)$ implies $\Psi_{k}(f)=\Psi_k(g)$, we get $\int_0^\infty h(f-g) = 0$, and thus $f=g$ a.e.
\end{proof}

We begin our study of the families $\mc{L}_n^\pm$ with a lemma which will be needed to show the uniqueness
in Theorem \ref{moments-main}{\it(i)}.

\begin{lemma}\label{injectivity}
The map $\Psi_n$ is injective on $\mc{L}_n^\pm$. 
\end{lemma}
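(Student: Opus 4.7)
The strategy is to reduce Lemma \ref{injectivity} to Lemma \ref{l3}. Let $f,g$ be two members of the same family $\mc{L}_n^+$ or $\mc{L}_n^-$ satisfying $\Psi_n(f)=\Psi_n(g)$. If we show that $f-g$ changes sign at most $n-1$ times on $(0,\infty)$, then Lemma \ref{l3} gives $f=g$ almost everywhere, which in view of the parametric form of members of $\mc{L}_n^{\pm}$ (equality a.e.\ forces both the support and the piecewise linear log-density to coincide) yields $f=g$ as functions.

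To bound the sign changes, write $V_f=-\log f$ and $V_g=-\log g$ on their respective supports. Each $V_\bullet$ is a continuous piecewise linear convex function with $V_\bullet(0)=0$ (since $f(0)=g(0)=1$), and on the region where both $V$'s are finite, the sign of $f-g$ matches that of $V_g-V_f$. The key elementary observation to be used repeatedly is the following: a continuous piecewise linear function $\varphi$ on $[0,L]$ with $\varphi(0)=0$ and at most $M$ linear pieces has at most $M-1$ sign changes on $(0,L)$, because each linear piece contributes at most one zero and the forced zero at $t=0$ already consumes one of them.

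Applied to $V_f-V_g$, the defining parameters of each family translate directly into a tight piece count. For $\mc{L}_{2k+1}^+$, each of $V_f,V_g$ has at most $k+1$ linear pieces on $[0,\infty)$, so $V_f-V_g$ has at most $2k+1$ pieces and therefore at most $2k=n-1$ sign changes. For $\mc{L}_{2k}^+$, both $V_f$ and $V_g$ vanish on an initial interval, so $V_f-V_g\equiv 0$ on $[0,\min(b_1(f),b_1(g))]$; past this flat region $V_f-V_g$ restarts at $0$ with at most $2k$ remaining pieces, giving at most $2k-1=n-1$ sign changes. For the compactly supported families $\mc{L}_{2k}^-$ and $\mc{L}_{2k+1}^-$, the same analysis applies on the common support $[0,\min(b_{k+1}(f),b_{k+1}(g))]$, and at most one additional sign change may occur at the boundary of the smaller support, where one function drops to zero while the other remains positive; in each case the resulting total is at most $n-1$.

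The main obstacle is the careful bookkeeping in the bounded-support families, since the drop at the smaller support boundary produces an extra sign change precisely when $V_f$ and $V_g$ are strictly ordered there, and this extra unit must be absorbed into the $n-1$ bound without slack. Degenerate parameter configurations such as coinciding breakpoints $b_j(f)=b_j(g)$ or vanishing slopes $a_j=0$ only decrease the effective number of pieces of $V_f-V_g$ and therefore cannot endanger the bound.
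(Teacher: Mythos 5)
Your proposal is correct and takes the same route as the paper: reduce to Lemma \ref{l3} by showing that for $f,g$ in the same family $\mc{L}_n^\pm$ the difference $f-g$ changes sign at most $n-1$ times on $(0,\infty)$, then pass from a.e.\ equality to exact equality using the specific form of the functions. The paper compresses the sign-change count into the phrase ``a careful case analysis,'' and your piece-counting argument (with the extra bookkeeping for the flat initial region and the support truncation) is a faithful unpacking of exactly that analysis.
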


\begin{proof}
A careful case analysis shows that if $f,g\in\mc{L}_n^+$ or $f,g\in\mc{L}_n^-$, then $f-g$ changes sign at most $n-1$ times on $(0,\infty)$. Therefore, Lemma \ref{l3} shows that if $\Psi_n(f)=\Psi_n(g)$, then $f=g$ a.e. It follows that $f=g$ everywhere, due to the convention $\infty \cdot 0 = 0$ which leads to the lower semi-continuity of the members of $\mc{L}_n^\pm$.
\end{proof}

We are ready to formulate and prove our main proposition of this subsection.

\begin{proposition}\label{prop:extremalmoments}
For $n\geq1$, suppose that the functions $f \in \mc{L}$, $f_+ \in \mc{L}_{n}^+$ and $f_- \in \mc{L}_{n}^-$ are such that 
\begin{equation}
\Psi_n(f_+)=\Psi_n(f_-)=\Psi_n(f)
\end{equation}
and let $p_{i_1}<p_{i_2}<\ldots<p_{i_{n+1}}$ be the increasing rearrangement of $p_1,\ldots,p_{n+1}$. The following hold true.
\begin{itemize}
\item[{\it(i)}] If $n+1=i_k$ and $n+1-k$ is even, then 
\begin{equation} 
m_{n+1}(f_-) \leq m_{n+1}(f) \leq m_{n+1}(f_+).
\end{equation}
If $n+1-k$ is odd, then the above inequalities are reversed.
\item[{\it(ii)}] If $f_+$ or $f_-$ belongs to $\mc{L}_{n-1}^+ \cup \mc{L}_{n-1}^-$ then $f_+=f_-$ and, in particular,  $m_{n+1}(f_-)=m_{n+1}(f_+)$.  
\item[{\it(iii)}] If $f_+ \notin \mc{L}_{n-1}^+\cup\mc{L}_{n-1}^-$ and $f_- \notin \mc{L}_{n-1}^+\cup\mc{L}_{n-1}^-$ then $m_{n+1}(f_-) \ne m_{n+1}(f_+)$.
\item[{\it(iv)}] If $m_{n+1}(f)=m_{n+1}(f_\pm)$ then $f=f_\pm$ a.e.
\end{itemize}
\end{proposition}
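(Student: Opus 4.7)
The plan follows the strategy outlined in the introduction of the paper. Since $\Psi_n(f) = \Psi_n(f_\pm)$, for any $c_1,\ldots,c_n \in \R$ we have the identity
\[
m_{n+1}(f) - m_{n+1}(f_\pm) = \int_0^\infty h(t)\bigl(f(t)-f_\pm(t)\bigr)\,\dd t, \qquad h(t) := t^{p_{n+1}} + \sum_{i=1}^n c_i\, t^{p_i}.
\]
The geometric crux is the following claim: if $f \neq f_\pm$, then $f - f_\pm$ changes sign on $(0,\infty)$ at exactly $n$ points $0 < t_1 < \cdots < t_n$. Once this is granted, Lemma \ref{l2} lets one solve a non-singular linear system for $c_1,\ldots,c_n$ so that $h$ vanishes at each $t_j$, and then Lemma \ref{l1} guarantees that these are the only zeros of $h$ on $(0,\infty)$ and that $h$ changes sign at each of them. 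Consequently $h\cdot(f-f_\pm)$ is of constant sign on $(0,\infty)$.

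To prove the sign-change claim, I would set $g = \log f - \log f_\pm$, note that on each linear piece of the piecewise-affine function $\log f_\pm$ the function $g$ is concave (as a difference of a concave function and an affine one), and argue piece-by-piece that the total number of zeros of $g$ on $(0,\infty)$ is at most $n$. Here the precise $n$-parameter structure of $\mc{L}_n^\pm$ from \eqref{eq:Lndefine} enters, as does the boundary behaviour at $t=0$ and (for $\mc{L}_n^-$) at the endpoint of $\operatorname{supp} f_-$. The matching lower bound of $n$ sign changes, when $f \neq f_\pm$, is immediate from Lemma \ref{l3}: at most $n-1$ sign changes together with $\Psi_n(f) = \Psi_n(f_\pm)$ would force $f = f_\pm$ almost everywhere.

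With the key claim in hand, (i) reduces to identifying the (constant) sign of $h\cdot(f-f_\pm)$, which I would pin down by analysing $h$ near $0^+$ (where $t^{q_1}$ dominates) and the sign of $f - f_\pm$ on the first interval $(0,t_1)$; the combinatorial datum $k$ and the parity of $n+1-k$ enter at this stage to determine the direction of the inequality. For (iv), equality $m_{n+1}(f) = m_{n+1}(f_\pm)$ forces $h\cdot(f-f_\pm) \equiv 0$ a.e., and since $h$ has only finitely many zeros this yields $f = f_\pm$ a.e. Part (ii) is immediate from the hierarchy $\mc{L}_{n-1}^+ \cup \mc{L}_{n-1}^- = \mc{L}_n^+ \cap \mc{L}_n^-$ in \eqref{hierarchy}: if $f_+$ lies in the left-hand side, it also belongs to $\mc{L}_n^-$, so the injectivity of $\Psi_n$ on $\mc{L}_n^-$ (Lemma \ref{injectivity}) gives $f_+ = f_-$. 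For (iii), I apply (i) with $f_-\in\mc{L}$ playing the role of $f$; if the resulting weak inequality were an equality, then (iv) would give $f_+ = f_-$, placing both functions in $\mc{L}_n^+ \cap \mc{L}_n^- = \mc{L}_{n-1}^+ \cup \mc{L}_{n-1}^-$, contrary to hypothesis.

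The main obstacle is the key geometric claim, and specifically the sharp upper bound of $n$ on the number of sign changes of $f - f_\pm$. The case structure of $\mc{L}_n^\pm$ (parity of $n$, compactly supported versus non-compactly supported) makes this zero count delicate: one has to treat carefully the junctions between the linear pieces of $\log f_\pm$, the behaviour at $t = 0$ (where the right-derivatives of $\log f$ and $\log f_\pm$ must be compared), and the boundary of the support. A closely related subtlety is correctly matching the initial sign of $f - f_\pm$ with the sign of $h$ near $0^+$, which is precisely what produces the parity condition on $n+1-k$ in the statement of (i).
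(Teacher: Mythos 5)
Your proposal follows essentially the same strategy as the paper's proof: establish that $f-f_\pm$ changes sign exactly $n$ times (at most $n$ from log-concavity and the piecewise-affine structure of $\log f_\pm$, at least $n$ from Lemma~\ref{l3}), then build the interpolating $h=t^{p_{n+1}}+\sum c_i t^{p_i}$ via Lemmas~\ref{l1} and~\ref{l2} so that $h\cdot(f-f_\pm)$ has constant sign, and route (ii)--(iv) through these same ingredients (the paper proves (ii) and (iv) by directly re-invoking Lemma~\ref{l3} rather than via the injectivity lemma or part (iv), but this is cosmetic). One point worth flagging: for the parity in (i) the paper examines $h$ and $f-f_-$ immediately to the right of the last sign change, i.e.\ just past the compact support of $f_-$ where $f-f_-=f\geq 0$ trivially, and reads off the sign of the leading coefficient $c_{i_{n+1}}$ via Cramer's rule and Lemma~\ref{l2}; this is cleaner than the analysis near $0^+$ you propose, since $f(0)=f_\pm(0)=1$ forces an extra comparison of right-derivatives there.
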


\begin{proof}
We shall prove that if $n+1-k$ is even, then $m_{n+1}(f_-) \leq m_{n+1}(f)$ and the reverse holds if $n+1-k$ is odd. The inequalities for $f_+$ are identical. We can clearly assume that $f$ is not equal to $f_-$. Then, by the log-concavity of $f$ and the definition of $\mc{L}_n^-$, the function $f-f_-$ changes sign at most $n$ times on $(0,\infty)$. Combining this fact with the assumption $\Psi_n(f)=\Psi_n(f_-)$ and Lemma \ref{l3}, we infer that $f-f_-$ changes sign exactly $n$ times on $(0,\infty)$. As in the proof of Lemma \ref{l3}, take $h(t) = \sum_{i=1}^{n+1} c_i t^{p_i}$ with $c_{n+1}=1$ and choose $c_1,\ldots,c_{n}\in\R$ such that $h(f-f_-)$ has a fixed sign. Note that in a small neighbourhood to the right of the last sign change (when $f_-$ jumps to $0$) the sign of  $f-f_-$ must be positive, since otherwise the number of sign changes would be strictly less than $n$. What remains is to examine the sign of the function $h$ to the right of the last sign change or, equivalently, the sign of the coefficient $c_s$, where $s=i_{n+1}$ is the index of the maximal exponent $p_{i_{n+1}}$. We can clearly assume that $p_1<\ldots<p_n$, therefore $s=n$ or $s=n+1$. If $s=n+1$ we have $c_s=1$ and thus $h(f-f_-)\geq0$. In this case we get  
\[
	\int_{0}^\infty t^{p_{n+1}}(f(t)-f_-(t)) \dd t = \int_{0}^\infty h(t)\big(f(t)-f_-(t)\big)\dd t  \geq  0.
\]
Assume now that $s=n$, and recall that the vector $c=(c_1,\ldots,c_n)$ was constructed as the solution to the linear system
\begin{equation}
\underbrace{\begin{pmatrix} 
    t_1^{p_1} & \dots & t_1^{p_n} \\
    \vdots & \ddots & \vdots \\
    t_n^{p_1} &    \dots    & t_n^{p_n}
    \end{pmatrix}}_\text{$A$} \cdot
\begin{pmatrix}
c_1 \\ 
\vdots \\
c_n
\end{pmatrix}
= - \begin{pmatrix}
t_1^{p_{n+1}} \\
\vdots \\
t_n^{p_{n+1}}
\end{pmatrix},
\end{equation}
where $\det(A)>0$ from Lemma \ref{l2}. Hence, a straightforward application of Cramer's rule, shows that $c_n$ has the same sign as
\begin{equation}
- \det \begin{pmatrix} 
    t_1^{p_1} & \dots & t_1^{p_{n-1}} & t_1^{p_{n+1}}\\
    \vdots & \ddots & \vdots & \vdots\\
    t_n^{p_1} &    \dots    & t_n^{p_{n-1}} & t_n^{p_{n+1}}
    \end{pmatrix},
\end{equation}
which is positive if $n+1-k$ is even and negative if $n+1-k$ is odd, as can be seen by repeatedly swapping columns so that the exponents $p_i$ are ordered and then applying Lemma \ref{l2}. Knowing the sign of $c_n$, we then find $\lim_{t\to\infty}h(t)$ as before and thus decide whether $h(f-f_-)$ is nonnegative or nonpositive. Then {\it(i)} follows by integrating.

Part {\it(ii)} is an immediate consequence of Lemma \ref{l3}, since if, say $f_+ \in\mc{L}_{n-1}^+\cup\mc{L}_{n-1}^-$, then for any $f \in \mc{L}$ the function $f_+-f$ changes sign at most $n-1$ times, in particular so does $f_+ - f_-$. To prove part {\it(iii)}, first observe that the assumption implies that $f_+$ is not equal to $f_-$. Thus, the same argument used for {\it(i)} shows that $f_+-f_-$ changes sign exactly $n$ times and choosing the function $h$ as above, gives $\int_0^\infty h(f_+-f_-) \ne 0$, since $h(f_+-f_-)$ is not identically zero and has a fixed sign. Part {\it(iv)} follows again from Lemma \ref{l3} by observing that $f-f_\pm$ changes sign in at most $n$ points and $\Psi_{n+1}(f)=\Psi_{n+1}(f_\pm)$.
\end{proof}


\subsection{Topological facts}\label{4.2} We will also need the following standard topological lemmas.

\begin{lemma}\label{top1} 
Let $B_0\subseteq\R^n$ be a set homeomorphic to the closed Euclidean ball $B_2^n$ and suppose that $F_+, F_-:B_0\to\R$ are two continuous functions such that $F_+(x) \geq F_-(x)$ for every $x\in B_0$, with equality if and only if $x \in \partial B_0$. Then, the set
\begin{equation} \label{eq:Cdefinition}
	C = \{(x,y) \in B_0 \times \R: \ F_-(x) \leq y \leq F_+(x)  \}
\end{equation}
is homeomorphic to the closed Euclidean ball $B_2^{n+1}$ and
\begin{equation} \label{eq:Cboundary}
	\partial C = \big\{(x,F_-(x)): \ x \in B_0 \big\} \cup \big\{(x,F_+(x)): \ x \in B_0 \big\}.
\end{equation}
\end{lemma}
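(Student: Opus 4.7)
The plan is to reduce to a model case and then exhibit an explicit homeomorphism. First, let $\psi\colon B_0 \to B_2^n$ be the given homeomorphism and set $\widetilde{F}_\pm = F_\pm \circ \psi^{-1}\colon B_2^n \to \R$. The assignment $(x,y)\mapsto(\psi(x),y)$ is manifestly a homeomorphism from $C$ onto
$$\widetilde{C} = \big\{(u,y)\in B_2^n\times\R: \widetilde{F}_-(u) \leq y \leq \widetilde{F}_+(u)\big\},$$
and it sends the graphs of $F_\pm$ to the graphs of $\widetilde{F}_\pm$, so it suffices to construct a homeomorphism $G\colon \widetilde{C} \to B_2^{n+1}$ that sends the two graphs onto the two closed hemispheres of $B_2^{n+1}$.

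Second, I would define $G$ by the explicit vertical-rescaling formula
$$G(u,y) = \left(u,\; \frac{2y-\widetilde{F}_+(u)-\widetilde{F}_-(u)}{\widetilde{F}_+(u)-\widetilde{F}_-(u)}\cdot \sqrt{1-|u|^2}\right) \qquad \text{when } |u|<1,$$
and $G(u,y)=(u,0)$ when $|u|=1$ (in which case $\widetilde{F}_+(u)=\widetilde{F}_-(u)=y$, so there is no ambiguity). The constraint $\widetilde{F}_-(u)\leq y\leq\widetilde{F}_+(u)$ forces the middle factor to lie in $[-1,1]$, hence $G(u,y)\in B_2^{n+1}$.

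Third, I would verify three properties. For \emph{continuity}, the only delicate points are those with $|u|=1$; there the bounded factor in $[-1,1]$ is multiplied by $\sqrt{1-|u|^2}\to 0$, which absorbs any would-be discontinuity coming from the $0/0$ indeterminate form. For \emph{bijectivity}, on each fibre with $|u|<1$ the strict inequality $\widetilde{F}_+(u)>\widetilde{F}_-(u)$ makes $y\mapsto G(u,y)$ an affine bijection between $[\widetilde{F}_-(u),\widetilde{F}_+(u)]$ and the chord $\{u\}\times[-\sqrt{1-|u|^2},\sqrt{1-|u|^2}]$, while for $|u|=1$ the fibres of both $\widetilde{C}$ and $B_2^{n+1}$ collapse to a single point. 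Finally, $\widetilde{C}$ is compact as a closed bounded subset of $\R^{n+1}$ (closed because $\widetilde{F}_\pm$ are continuous, bounded because $B_0$ is compact), and $B_2^{n+1}$ is Hausdorff, so the continuous bijection $G$ is automatically a homeomorphism.

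For the description of $\partial C$, note that $\partial B_2^{n+1}=S^n$ is exactly the locus where $|u|^2+v^2=1$; pulling this back through $G$, it corresponds to the middle factor being $\pm1$, i.e.\ $y=\widetilde{F}_\pm(u)$, together with $|u|=1$ (which is already contained in both graphs by assumption). Transporting back through $\psi$ yields \eqref{eq:Cboundary}. The only real obstacle here is the continuity of $G$ at points of $\partial B_0$, but the boundedness of the vertical factor together with the vanishing of $\sqrt{1-|u|^2}$ resolves this cleanly; everything else is a bookkeeping exercise with the explicit formula.
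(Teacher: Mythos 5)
Your proposal is correct and uses essentially the same explicit vertical-rescaling homeomorphism as the paper (your map $G$ is precisely the paper's inverse map $\Omega^{-1}$, after the same initial reduction to $B_0=B_2^n$). The only cosmetic difference is that you construct the map in one direction only and invoke the compact-to-Hausdorff criterion, whereas the paper writes out both $\Omega$ and $\Omega^{-1}$ and verifies continuity of each via the sandwich rule; both routes are equally valid.
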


\begin{proof}
Let $h:B_2^n \to B_0$ be a homeomorphism. By considering the functions $F_+ \circ h$ and $F_- \circ h$ on $B_2^n$, we can clearly assume that $B_0=B_2^n$. Then, we claim that the function $\Omega(x,y) = (x,\omega(x,y))$, where
\begin{equation} \label{eq:homeomorphism}
	\omega(x,y)= \left\{  \begin{array}{ll}  F_+(x) + \frac{F_+(x)-F_-(x)}{2} \Big( \frac{y}{(1-\|x\|_2^2)^{1/2}}-1 \Big), \quad  & \|x\|_2<1 \\
	F_+(x)=F_-(x), \quad & \|x\|_2=1 \end{array} \right.,
\end{equation}
is a continuous map from $B_2^{n+1}$ to $C$. Indeed, the continuity on the interior of $B_2^{n+1}$, as well as the continuity at points $(x,y)\in\partial B_2^{n+1}$ with $y\neq0$, is clear. We are left with checking the continuity at points $(x,0)$, where $x$ satisfies $\|x\|_2=1$. Suppose $(x_n, y_n) \to (x,0)$. It is enough to show that $\omega(x_n,y_n) \to \omega(x,0)=F_+(x)=F_-(x)$. We have $\omega(x_n,y_n) \in [F_-(x_n),F_+(x_n)]$ and the desired convergence follows by the sandwich rule.

Moreover, the inverse of the map \eqref{eq:homeomorphism} is given by $\Omega^{-1}(x,y)=(x,\theta(x,y))$, where
\begin{equation} \label{eq:inversehomeomorphism}
	\theta(x,y) = \left\{  \begin{array}{ll} \left(\frac{2}{F_+(x)-F_-(x)}(y-F_+(x))+1\right)(1-\|x\|_2^2)^{1/2} , \quad &  \|x\|_2<1 \\
	0, \quad &  \|x\|_2=1 \end{array} \right.
\end{equation}
and is also continuous. Indeed the only problematic case in checking the continuity occurs when $F_+(x)=F_-(x)$, that is, $\|x\|_2=1$. In this case, if $(x_n,y_n) \to (x,0)$ then $\theta(x_n,y_n) \to \theta(x,0)=0$ since
\[
\theta(x_n,y_n) \in \left[-(1-\|x_n\|_2^2)^{1/2}, (1-\|x_n\|_2^2)^{1/2}\right],
\]
and we can again use the sandwich rule. Hence $C$ is indeed homeomorphic to $B_2^{n+1}$. The description of the boundary of $C$ follows from the continuity of $F_+$ and $F_-$ and from the fact that they coincide on the boundary of $B_2^n$.
\end{proof}

\begin{lemma}\label{top2}
Let $P$ and $C$ be two subsets of $\mathbb{R}^n$ homeomorphic to a closed Euclidean ball $B$. Consider a continuous function $f:P\to\R^n$ that is injective on $\mathrm{int}(P)$ and assume that $f(P)\subseteq C$ and $f(\partial P)=\partial C$. Then $f(P)=C$.
\end{lemma}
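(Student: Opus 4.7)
The plan is to combine Brouwer's invariance of domain theorem with a clopen-set argument on $\mathrm{int}(C)$. Since $P,C \subseteq \R^n$ are both homeomorphic to the closed Euclidean ball $B_2^n$, invariance of domain guarantees that their topological interiors in $\R^n$ coincide with their ``manifold interiors'' (images of the open ball under the respective homeomorphisms), so that $\mathrm{int}(P)$ and $\mathrm{int}(C)$ are non-empty and open in $\R^n$, the sets $\partial P$ and $\partial C$ are the corresponding topological boundaries in $\R^n$, and $\mathrm{int}(C)$ is connected (being homeomorphic to the open ball).

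Set $V := f(\mathrm{int}(P))$. First I would apply invariance of domain to the continuous injection $f|_{\mathrm{int}(P)}:\mathrm{int}(P)\to\R^n$ to conclude that $V$ is open in $\R^n$. Combined with $V \subseteq f(P) \subseteq C$, this forces $V \subseteq \mathrm{int}(C)$, and in particular $V$ is open in $\mathrm{int}(C)$. Next I would verify that $V$ is also closed in $\mathrm{int}(C)$. Take any $y \in \mathrm{int}(C)$ that is the limit of some sequence $y_k = f(x_k)$ with $x_k \in \mathrm{int}(P)$. Compactness of $P$ allows us to pass to a subsequence with $x_k \to x \in P$, and continuity of $f$ gives $f(x)=y$. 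If $x$ were in $\partial P$, then $f(x) \in f(\partial P)=\partial C$, contradicting $y \in \mathrm{int}(C)$. Hence $x \in \mathrm{int}(P)$ and $y \in V$.

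Since $V$ is a non-empty clopen subset of the connected space $\mathrm{int}(C)$, the conclusion is $V=\mathrm{int}(C)$, and therefore
\[
f(P) \;=\; f(\mathrm{int}(P)) \cup f(\partial P) \;=\; \mathrm{int}(C) \cup \partial C \;=\; C,
\]
which is what we want. The main obstacle, or rather the only delicate point, is the use of invariance of domain in two roles: first, to ensure that $f(\mathrm{int}(P))$ is genuinely open in $\R^n$ (thus inside $\mathrm{int}(C)$), and second, to identify the manifold-theoretic interiors and boundaries of $P$ and $C$ (which is the language the hypotheses implicitly use via the homeomorphism with a ball) with the Euclidean topological interior and boundary in $\R^n$ (which is what the limit/compactness argument manipulates). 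Once this identification is in place, the remainder is a standard compactness-plus-connectedness argument.
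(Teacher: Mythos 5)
Your proof is correct, and it takes a genuinely different route from the paper's. The paper first reduces to $P=C=B$ and then argues by contradiction with a radial sweep: if some $y_0\in\mathrm{int}(B)$ were missed by $f$, then for each direction $\theta\in S^{n-1}$ the point of $f(B)$ closest to $y_0$ along the ray $\{y_0+t\theta:t\geq 0\}$ cannot lie in the open set $f(\mathrm{int}(B))$ (invariance of domain would let one move closer), hence lies in $f(\partial B)=\partial B$; since $f(B)\subseteq B$ and the ball is star-shaped about $y_0$, this forces $f(B)\subseteq\partial B$, contradicting the openness of $f(\mathrm{int}(B))$. Your argument instead exhibits $V=f(\mathrm{int}(P))$ as a nonempty clopen subset of the connected set $\mathrm{int}(C)$: openness is again invariance of domain, while closedness follows from compactness of $P$ together with the boundary condition $f(\partial P)=\partial C$, which excludes limits in $\mathrm{int}(C)$ coming from $\partial P$. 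Both proofs hinge on invariance of domain; yours additionally invokes it (as you note) to identify the manifold-theoretic interiors and boundaries of $P$ and $C$ with the Euclidean topological ones, a point the paper sidesteps by normalizing to $B$ at the outset. What your route buys is that it never uses the geometry of the ball: it works verbatim for any compact $C\subseteq\R^n$ with connected interior, and the clopen-connectedness step replaces the somewhat ad hoc radial picture with a standard topological argument.
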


\begin{proof}
We can clearly assume that $P=C=B$. Suppose the assertion does not hold, that is, there exists $y_0 \in B$ such that $y_0 \notin f(B)$. For any $\theta \in S^{n-1}$ let us define
\[
	r(\theta) = \mbox{the point } y \ \textrm{in} \ \{y_0+t\theta: \ t\geq0\}\cap f(B) \ \mbox{which is closest to } y_0.
\] 
Since $f(B)$ is compact, $r(\theta)$ is well defined. We claim that $r(\theta) \notin f(\textrm{int}(B))$. Indeed, by the invariance of domain theorem (see \cite[Theorem~2B.3]{Hat}), $f|_{\textrm{int}(B)}$ is an open map and therefore $f(\textrm{int}(B))$ is an open subset of $\R^n$. If $r(\theta)$ was in $f(\textrm{int}(B))$, then it would be contained in $f(\textrm{int}(B))$
along with a ball around it, hence contradicting its minimality. We get that $r(\theta) \in \partial B$ for any $\theta \in S^{n-1}$ and thus $f(B) \subseteq \partial B$. In particular $f(\textrm{int}(B)) \subseteq \partial B$, which is a contradiction since $f(\textrm{int}(B))$ is open.    
\end{proof}

\subsection{Technical facts}\label{4.3}

For every function space $\mc{L}_n^\pm$ we denote by $\mc{P}_n^\pm \subset [0,\infty]^n$ the corresponding parameter space of the vectors of parameters $(a,b)$ appearing in \eqref{eq:Lndefine}. The parameter space $\mc{P}_n^{\pm}$ is compact (in the usual topology of $[0,\infty]^n$) and homeomorphic to the closed Euclidean ball $B_2^n$. These parameter spaces give rise to natural maps $e_n^\pm:\mc{P}_n^\pm \to \mc{L}_n^\pm$, which are injective on the interiors of $\mc{P}_n^\pm$ (but not on the boundaries). A simple case analysis also shows that
\begin{equation} \label{eq:enboundary}
e_n^\pm (\partial \mc{P}_n^\pm) = \mc{L}_{n-1}^+ \cup \mc{L}_{n-1}^-.
\end{equation}
Fix $n\geq1$ and distinct $p_1,\ldots,p_{n+1}\in(-1,\infty)$. For $M>0$ consider the class
\begin{equation} \label{eq:lnm}
	\mc{L}_{n,M}^\pm = \Big\{ f\in\mc{L}_n^\pm: \ \int_0^\infty f(t)\dd t \leq M\Big\}.
\end{equation}
and note that $\bigcup_{M>0}\mc{L}_{n,M}^\pm = \mc{L}_{n}^\pm \setminus \{f \equiv 1\}$. Denote by $\mc{P}_{n,M}^\pm= (e_n^\pm)^{-1}(\mc{L}_{n,M}^\pm)$ the corresponding parameter space. Moreover, if $p=\min_{i=1,\ldots,n+1} p_i$ and $P=\max_{i=1,\ldots,n+1} p_i$, we equip the space $\mc{L}_{n,M}^\pm$ with the metric
\begin{equation} \label{eq:dmetric}
	d(f,g) = \int_{0}^\infty |f(t)-g(t)| (t^{p}+t^P) \dd t,
\end{equation}
which is well defined since the only log-concave function $f\in\mc{L}$ which does not decay exponentially is $f \equiv 1$. 

We will prove the following technical proposition. 

\begin{proposition}\label{prop:technical}
For every $n \geq 1$ and $M > 0$ the following hold true. 
\begin{itemize}
	\item[\it (i)] The functionals $m_{i}$ are continuous  on $(\mc{L}_{n,M}^\pm,d)$ for every $i=1,\ldots,n+1$. As a consequence, the map $\Psi_n$ is also   continuous  on $(\mc{L}_{n,M}^\pm,d)$.
	\item[\it (ii)]  The natural map $e_n^\pm:\mc{P}_{n,M}^\pm \to \mc{L}_{n,M}^\pm$ is a continuous map between compact spaces. 
	\item[\it (iii)]  The map $\Psi_n \circ e_n^\pm:\mc{P}_{n}^\pm \to \R^n$ is continuous.
	\item[\it (iv)] The map $\Psi_n:\mc{L}_{n,M}^\pm\to\Psi_n(\mc{L}_{n,M}^\pm)$ is a homeomorphism.
	\item[\it (v)] The map $m_{n+1}\circ(\Psi_n)^{-1}:\Psi_n(\mc{L}_n^\pm)\to\R_+\cup\{\infty\}$ is continuous.
\end{itemize}
\end{proposition}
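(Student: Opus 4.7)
The plan is to handle the five parts sequentially, each building on the previous. Part (i) is immediate: since $p_i\in[p,P]$, one has $t^{p_i}\leq t^p$ for $t\leq 1$ and $t^{p_i}\leq t^P$ for $t\geq 1$, so $t^{p_i}\leq t^p+t^P$ for every $t>0$. Consequently $|m_i(f)-m_i(g)|\leq \int_0^\infty t^{p_i}|f-g|\,\dd t\leq d(f,g)$, so each $m_i$ is $1$-Lipschitz with respect to $d$, and continuity of $\Psi_n=(m_1,\ldots,m_n)$ follows.

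For part (ii), I would take any convergent sequence $(a_k,b_k)\to(a,b)$ in $\mc{P}_n^\pm$ and first check by direct case analysis of \eqref{eq:Lndefine} that the corresponding functions $f_k=e_n^\pm(a_k,b_k)$ converge pointwise to $f=e_n^\pm(a,b)$ away from the finitely many break points of $f$. To upgrade this to $d$-convergence on $\mc{L}_{n,M}^\pm$, I would establish a universal exponential majorant by combining $f(0)=1$, log-concavity, and $\int f\leq M$: the monotonicity of $f$ gives $t_0 f(t_0)\leq \int_0^{t_0} f\leq M$, and then writing $f=e^{-V}$ with $V$ convex and $V(0)=0$, the monotonicity of $V(t)/t$ at $t_0=eM$ yields $f(t)\leq e\cdot e^{-t/(eM)}$ for all $t\geq 0$. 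Since this majorant has finite $d$-norm, Lebesgue's dominated convergence theorem gives $d(f_k,f)\to 0$. Compactness of $\mc{P}_{n,M}^\pm$ then follows from Fatou's lemma applied to $\int f_k\leq M$ (closing the defining constraint in the compact set $\mc{P}_n^\pm$), and $\mc{L}_{n,M}^\pm=e_n^\pm(\mc{P}_{n,M}^\pm)$ is therefore compact as a continuous image of a compact space. Part (iii) follows from (i) and (ii) at parameter configurations with $e_n^\pm(a,b)\not\equiv 1$; at configurations mapping to the constant function one has $\Psi_n\circ e_n^\pm=(\infty,\ldots,\infty)$, and the convergence $m_i(e_n^\pm(a_k,b_k))\to\infty$ again follows from Fatou applied to the pointwise convergence $f_k\to 1$.

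For part (iv), the map $\Psi_n:\mc{L}_{n,M}^\pm\to\Psi_n(\mc{L}_{n,M}^\pm)$ is a continuous bijection, being continuous by (i) and injective by Lemma \ref{injectivity}. Since $\mc{L}_{n,M}^\pm$ is compact by (ii) and the target is contained in the Hausdorff space $\R^n$, this continuous bijection is automatically a homeomorphism. Finally, for part (v), since $\mc{L}_n^\pm\setminus\{f\equiv 1\}=\bigcup_{M>0}\mc{L}_{n,M}^\pm$, continuity of $m_{n+1}\circ\Psi_n^{-1}$ at every point of $\Psi_n(\mc{L}_n^\pm)$ other than the single point $\Psi_n(1)=(\infty,\ldots,\infty)$ follows from (i) combined with (iv) on the appropriate $\mc{L}_{n,M}^\pm$. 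At the remaining point one argues as follows: any $f\in\mc{L}$ satisfies $f\leq 1$ on $[0,\infty)$, so $\int_0^1 t^{p_i}f\leq 1/(p_i+1)$, and hence if $\Psi_n(f_k)\to(\infty,\ldots,\infty)$ then $\int f_k\to\infty$. Setting $T_k=\sup\{t:f_k(t)\geq 1/2\}$, the monotonicity of $f_k$ yields $\int f_k\geq T_k/2$, so $T_k\to\infty$, and finally $m_{n+1}(f_k)\geq\tfrac{1}{2}\int_{T_k/2}^{T_k}t^{p_{n+1}}\,\dd t\to\infty$ since $p_{n+1}>-1$.

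I expect the principal technical difficulty to lie in part (ii), specifically in producing the universal exponential tail bound on $\mc{L}_{n,M}^\pm$ that makes the dominated convergence step work; everything else is either a routine Fatou/DCT application or the standard topological fact that a continuous bijection from a compact space to a Hausdorff space is a homeomorphism.
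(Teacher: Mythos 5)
Your parts (i), (ii) and (iv) are correct and follow the paper's route; the only superficial difference is the explicit form of the uniform exponential majorant in (ii) (you derive $f(t)\le e\cdot e^{-t/(eM)}$ from the monotonicity of $V(t)/t$, the paper derives $g(t)\le 2^{-t/2M}\1_{\{t\ge 2M\}}+\1_{\{t<2M\}}$ from $2M g(2M)\le\int g\le M$), and both serve the dominated-convergence step equally well.

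Parts (iii) and (v) share a genuine gap. The pieces $\mc{L}_{n,M}^\pm$ are \emph{not open} in $\mc{L}_n^\pm$, so the set identity $\mc{L}_n^\pm\setminus\{1\}=\bigcup_M\mc{L}_{n,M}^\pm$ does not by itself let you transfer the continuity established on each compact piece to continuity at a given non-constant $f$: one must first show that any approaching sequence $(f_k)$ is eventually confined to a single $\mc{L}_{n,M_0}^\pm$, and your write-up simply asserts that (iii) and (v) ``follow from'' the earlier parts without supplying this. In (iii) the paper obtains the confinement from pointwise convergence (for $f\not\equiv 1$ one has $f_k(L)<1/2$ eventually, hence $f_k(t)\le 2^{-t/L}$ on $[L,\infty)$, hence a uniform bound on $\int f_k$); in (v), where only $\Psi_n(f_k)$ is assumed to converge, it uses the moment-to-mass inequality \eqref{moment comparison}, itself bootstrapped from Proposition \ref{prop:extremalmoments} with one constraint. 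Separately, in (v) at $(\infty,\ldots,\infty)$, the implication ``$\Psi_n(f_k)\to\infty\Rightarrow\int f_k\to\infty$'' does not follow from the observation $\int_0^1 t^{p_i}f\le 1/(p_i+1)$: that gives $\int_1^\infty t^{p_i}f_k\to\infty$, which yields $\int f_k\to\infty$ when some $p_i\le0$, but when all of $p_1,\ldots,p_n$ are positive the comparison $t^{p_i}\ge 1$ on $[1,\infty)$ points the wrong way. Your $T_k$ device can be salvaged by cutting out the detour through $\int f_k$: log-concavity gives $f_k\le\1_{[0,T_k]}+2^{-t/T_k}\1_{(T_k,\infty)}$, hence $m_1(f_k)\le C_{p_1}T_k^{p_1+1}$, so $m_1(f_k)\to\infty$ already forces $T_k\to\infty$; but as written the step is unjustified.
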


\begin{proof}
\textit{(i)} Since $t^{p_i} \leq t^p+t^P$ for any $i=1,\ldots,n+1$, the continuity of $m_i$ is evident.

\textit{(ii)} Suppose that $(a^{(k)}, b^{(k)}) \in \mc{P}_{n,M}^\pm$ satisfy $(a^{(k)},b^{(k)}) \to (a,b)$ for some $(a,b) \in \mc{P}_{n,M}^\pm$. Let $f_k=e_n^\pm(a^{(k)}, b^{(k)})$ and  $f= e_n^\pm(a,b)$. Then $f_k \to f$ a.e. Indeed, the only point $t$ where $a_i^{(k)}(t-b_i^{(k)})$ might not converge to $a_i(t-b_i)$ is $t=b_i$, when $b_i$ is finite. Therefore, the convergence holds everywhere except for finitely many points. For every function $g \in \mc{L}_{n,M}^\pm$ we have $2M g(2M) \leq \int_0^\infty g \leq M$. This gives $g(2M) \leq 1/2$ and by log-concavity $g(t) \leq g(2M)^{t/2M} \leq 2^{-t/2M}$ for $t \geq 2M$. Thus, $g(t) \leq 2^{-t/2M}\1_{\{t \geq 2M\}} + \1_{\{t < 2M\}}$. We therefore get 
\[
|f_k(t)-f(t)| \leq 2 \cdot 2^{-t/2M}\1_{\{t \geq 2M\}} + 2\1_{\{t < 2M\}}
\]
and thus $\int_0^\infty|f_k(t)-f(t)|(t^p+t^P) \dd t \to 0$ by Lebesgue's dominated convergence theorem, i.e. $d(f_k,f) \to 0$. Hence, $e_n^\pm:\mc{P}_{n,M}^\pm \to \mc{L}_{n,M}^\pm$ is a continuous map. Since $\mc{L}_{n,M}^\pm$ is a closed subset of $\mc{L}_n^\pm$, $\mc{P}_{n,M}^\pm$ is a closed subset of the compact space $\mc{P}_n^\pm$, and thus it is compact. As a result, $\mc{L}_{n,M}^\pm = e_n^\pm(\mc{P}_{n,M}^\pm)$ is also compact.

\textit{(iii)} Let us consider a sequence of parameters $(a^{(k)}, b^{(k)}) \in \mc{P}_{n}^\pm$ converging to $(a,b)\in \mc{P}_{n}^\pm$. If $f=e_n^\pm(a,b)$ is not identically equal $1$, then by a.e.\! convergence of $f_k=e_n^\pm(a^{(k)}, b^{(k)})$ to $f$ we deduce that there exists $L>0$ such that eventually $f_k(L)<1/2$. By the same reasoning as in the proof of part \textit{(ii)} we see that eventually $f_k$ are exponentially bounded on $[L,\infty)$, namely $f_k(t) \leq 2^{-t/L}\1_{\{t \geq L\}} + \1_{\{t < L\}}$. Thus, eventually $f_k \in \mc{L}_{n,M_0}^\pm$ with $M_0=L(1+ 1/2\ln 2)$. Thus, in this case our assertion follows by combining \textit{(i)} and \textit{(ii)}. If $f \equiv 1$ then by Fatou's lemma
\[
	\infty = \int_0^\infty f(t)t^{p_i} \dd t \leq  \liminf_{k \to \infty} \int_0^\infty f_k(t) t^{p_i} \dd t, \qquad i=1,\ldots,n,
\]
and thus $\Psi_n(f_k) \to \Psi_n(f)=(\infty,\ldots,\infty)$.

\textit{(iv)} By Lemma \ref{injectivity} the map $\Psi_n$ is injective. From point \textit{(i)} it is also continuous. Hence, $\Psi_n:\mc{L}_{n,M}^\pm\to\Psi_n(\mc{L}_{n,M}^\pm)$ is a continuous bijection defined on the compact space $\mc{L}_{n,M}^\pm$ with values in the Hausdorff space $[0,\infty]^n$. Consequently, it is a homeomorphism.

{\it(v)} To prove the continuity at a point $\Psi_n(f) = m \in \Psi_n(\mc{L}_n^\pm)$ which is not $(\infty,\ldots,\infty)$, take a sequence $m_k = \Psi_n(f_k)$ convergent to $m$. It suffices to show that eventually all $f_k$ belong to $\mc{L}_{n,M_1}^\pm$ for some $M_1$ because {\it(i)} and {\it(iv)} immediately imply that for every $M>0$, $m_{n+1}\circ(\Psi_n)^{-1}$ restricted on $\Psi_n(\mc{L}_{n,M}^\pm)$ is continuous. For any $f \in \mc{L}$ and any $p,q>-1$  we have
\begin{equation}\label{moment comparison}
	\left( \int_{0}^\infty f(t) t^p \dd t \right)^{\frac{1}{p+1}} \leq C_{p,q} \left( \int_{0}^\infty f(t) t^q \dd t \right)^{\frac{1}{q+1}},
\end{equation}
where
\[
	C_{p,q}=\max\left\{\frac{(q+1)^{\frac{1}{q+1}}}{(p+1)^{\frac{1}{p+1}}}, \frac{\Gamma(p+1)^{\frac{1}{p+1}}}{\Gamma(q+1)^{\frac{1}{q+1}}}\right\}.
\]
To prove the above inequality choose unique functions $f_+ \in \mc{L}_1^+$ and $f_- \in \mc{L}_1^-$ such that $\int_0^\infty f(t) t^q\dd t = \int_0^\infty f_+(t) t^q\dd t = \int_0^\infty f_-(t) t^q\dd t$. Applying Proposition \ref{prop:extremalmoments} in the case $n=1$ with $p_1=q$ and $p_2=p$ reduces proving \eqref{moment comparison} to the case $f \in \{f_+, f_-\}$. The inequality follows by computing the resulting constants in these two cases. Since $\Psi_n(f_k)$ converges, there is $M_0>0$ such that $m_1(f_k) \leq M_0$ for any $k \geq 1$. It follows that $\int_0^\infty f_k \leq C_{0,p_1} M_0^{1/(p_1+1)}$ and so we can take $M_1=C_{0,p_1} M_0^{1/(p_1+1)} +1$. To prove the continuity at $(\infty,\ldots,\infty)$ is suffices to observe that due to \eqref{moment comparison} we get that $\int_0^\infty f_k(t)t^{p_1}\dd t \to \infty$ implies $\int_0^\infty f_k(t)t^{p_{n+1}}\dd t \to \infty$.  
\end{proof}

\subsection{Proof of Theorem \ref{moments-main}}\label{4.4}

Define $\mc{A}_n^\pm = \Psi_n(\mc{L}_n^\pm)$ and $\mc{B}_n=\Psi_n(\mc{L})$. To establish Theorem \ref{moments-main}\textit{(i)} we shall prove that $\mc{A}_n^\pm=\mc{B}_n$. Consider the functions $F_+$ and $F_-$ on $\mc{B}_{n-1}$, given by
\begin{equation} \label{eq:F+}
F_+(m_1,\ldots,m_{n-1}) = \sup\big\{m_n(f): \ f\in\mc{L} \mbox{ and } m_i(f)=m_i, \ i=1,\ldots,n-1\big\}  
\end{equation}
and
\begin{equation} \label{eq:F-}
F_-(m_1,\ldots,m_{n-1}) = \inf\big\{m_n(f): \ f\in\mc{L} \mbox{ and } m_i(f)=m_i, \ i=1,\ldots,n-1\big\}
\end{equation}
and let
\begin{equation} \label{eq:Cn}
\mc{C}_n=\big\{ (m_1,\ldots,m_n)\in\mc{B}_{n-1}\times\R: \ m_n\in\big[F_-(m_1,\ldots,m_{n-1}),F_+(m_1,\ldots,m_{n-1})\big]\big\}.
\end{equation}
It is clear from the definition of these sets that
\begin{equation}
\mc{A}_n^\pm \subseteq \mc{B}_n \subseteq \mc{C}_n.
\end{equation}
We will prove the following strengthening of Theorem \ref{moments-main}\textit{(i)}.

\begin{theorem} \label{A=B=C}
For every $n\geq1$ we have $\mc{A}_n^+=\mc{A}_n^- = \mc{B}_n = \mc{C}_n$. Moreover, these sets are homeomorphic to the Euclidean ball $B_2^n$ and their boundary is $\Psi_n(\mc{L}_{n-1}^+)\cup\Psi_n(\mc{L}_{n-1}^-)$.
\end{theorem}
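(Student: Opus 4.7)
My plan is to proceed by induction on $n$, using Lemmas \ref{top1} and \ref{top2} as the topological engine. The base case $n=1$ is direct: $\mc{L}_1^+=\{e^{-a|t|}\}_{a\in[0,\infty]}$ and $\mc{L}_1^-=\{\1_{|t|\leq b}\}_{b\in[0,\infty]}$ each yield every value in $[0,\infty]$ for the single moment $m_1$, the set $\mc{B}_0$ is a singleton so $\mc{C}_1=[F_-,F_+]=[0,\infty]$, and the boundary $\{0,\infty\}$ equals $\Psi_1(\mc{L}_0^+\cup\mc{L}_0^-)=\Psi_1(\{\1_{\{0\}},1\})$. All sets coincide and are homeomorphic to $B_2^1$.

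For the inductive step, assume the conclusion at level $n-1$. The key observation is that, thanks to the inductive identity $\mc{A}_{n-1}^\pm=\mc{B}_{n-1}$, for each $(m_1,\ldots,m_{n-1})\in\mc{B}_{n-1}$ there exists a \emph{unique} $f_\pm\in\mc{L}_{n-1}^\pm$ with $\Psi_{n-1}(f_\pm)=(m_1,\ldots,m_{n-1})$ (uniqueness coming from Lemma \ref{injectivity}). Applying Proposition \ref{prop:extremalmoments} with $n$ replaced by $n-1$ then shows that the supremum defining $F_+$ (resp.\ the infimum defining $F_-$) is attained at $f_+$ or $f_-$, depending on the parity of the rearrangement index of $p_n$ among $p_1,\ldots,p_n$. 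Consequently $F_\pm$ agrees, up to a possible swap of the labels, with $m_n\circ(\Psi_{n-1}|_{\mc{L}_{n-1}^\pm})^{-1}$, which is continuous on $\mc{B}_{n-1}$ by Proposition \ref{prop:technical}{\it(v)} (with $n$ there replaced by $n-1$ and $p_n$ playing the role of $p_{n+1}$). Moreover, Proposition \ref{prop:extremalmoments}{\it(ii)}--{\it(iii)} together with the hierarchy \eqref{hierarchy} guarantee that $F_+=F_-$ at $m$ if and only if $f_+\in\mc{L}_{n-2}^+\cup\mc{L}_{n-2}^-$ (equivalently $f_-$ does, in which case $f_+=f_-$), and by the inductive hypothesis this locus is precisely $\partial\mc{B}_{n-1}$. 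Hence the hypotheses of Lemma \ref{top1} are met and $\mc{C}_n\cong B_2^n$ with $\partial\mc{C}_n$ equal to the union of the graphs of $F_\pm$, that is, $\Psi_n(\mc{L}_{n-1}^+)\cup\Psi_n(\mc{L}_{n-1}^-)$.

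To conclude, consider the map $\Phi_+:=\Psi_n\circ e_n^+:\mc{P}_n^+\to\R^n$. It is continuous by Proposition \ref{prop:technical}{\it(iii)}, it takes values in $\mc{C}_n$ because $\mc{A}_n^+\subseteq\mc{B}_n\subseteq\mc{C}_n$ by construction, and it is injective on $\mathrm{int}(\mc{P}_n^+)$ as the composition of the injective maps $e_n^+|_{\mathrm{int}(\mc{P}_n^+)}$ and $\Psi_n|_{\mc{L}_n^+}$ (Lemma \ref{injectivity}). By \eqref{eq:enboundary} we also have $\Phi_+(\partial\mc{P}_n^+)=\Psi_n(\mc{L}_{n-1}^+\cup\mc{L}_{n-1}^-)=\partial\mc{C}_n$. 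Since both $\mc{P}_n^+$ and $\mc{C}_n$ are homeomorphic to $B_2^n$, Lemma \ref{top2} forces $\mc{A}_n^+=\Phi_+(\mc{P}_n^+)=\mc{C}_n$. The same argument with $e_n^-$ in place of $e_n^+$ gives $\mc{A}_n^-=\mc{C}_n$, and the sandwiching $\mc{A}_n^\pm\subseteq\mc{B}_n\subseteq\mc{C}_n$ yields $\mc{A}_n^+=\mc{A}_n^-=\mc{B}_n=\mc{C}_n$, homeomorphic to $B_2^n$ with the stated boundary.

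The main technical obstacle is the identification of the locus where $F_+=F_-$: the ``if'' direction rests on the inclusion $\mc{L}_{n-2}^+\cup\mc{L}_{n-2}^-\subseteq\mc{L}_{n-1}^+\cap\mc{L}_{n-1}^-$ from \eqref{hierarchy} combined with uniqueness (Lemma \ref{injectivity}), while the ``only if'' direction is exactly the contrapositive of Proposition \ref{prop:extremalmoments}{\it(iii)}. A minor nuisance is that $F_\pm$ may take the value $+\infty$, so Lemma \ref{top1} is applied after an innocuous change of coordinates compactifying $[0,\infty]^n$; once this is handled, the inductive step goes through smoothly.
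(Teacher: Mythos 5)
Your proposal is correct and follows essentially the same approach as the paper: induction on $n$, the base case computed directly, and the inductive step proceeding via Proposition \ref{prop:extremalmoments}(i)--(iii), Proposition \ref{prop:technical}(iii) and (v), Lemma \ref{injectivity}, the identity \eqref{eq:enboundary}, and the topological Lemmas \ref{top1} and \ref{top2}, in the same order and with the same logic. One small credit: you flag that $F_\pm$ may take the value $+\infty$ (at $\Psi_{n-1}(1)$) so that Lemma \ref{top1}, as stated with real-valued $F_\pm$ on a subset of $\R^n$, must be applied after a compactifying change of coordinates on $[0,\infty]^n$; the paper's proof does not make this point explicit, so this is a genuine, if minor, improvement in rigor.
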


\begin{proof} The proof goes by induction on $n$. For $n=1$ we get 
\begin{equation}
\mc{L}_1^+=\{f(t)=e^{-at}: \ 0\leq a\leq\infty\} \ \ \mbox{and} \ \ \mc{L}_1^-=\{f(t)=\1_{[0,b]}: \ 0\leq b\leq\infty\}.
\end{equation} 
Thus, since $\mc{P}_1^\pm=[0,\infty]$, we get $m_1(e_1^+(a))=a^{-(p_1+1)}\Gamma(p_1+1)$ and $m_1(e_1^-(b))=\frac{1}{p_1+1}b^{p_1+1}$, which implies that $\mc{A}_1^\pm = \mc{B}_1=\mc{C}_1=[0,\infty]$. Since $\mc{L}_0^\pm=\{\1_{\{0\}},1\}$ we get $\partial \mc{B}_1=\{0,\infty\}=\Psi_1(\mc{L}_0^+)=\Psi_1(\mc{L}_0^-)$. Therefore, the assertion is true for $n=1$. 

Suppose that the claim is true for $n$ constraints and we will show it for $n+1$. We will first determine the boundary of $\mc{C}_{n+1}$. Let $m=(m_1,\ldots,m_{n})\in \mc{B}_{n}$. By the induction hypothesis there exist $f_+ \in \mc{L}_{n}^+$ and $f_- \in \mc{L}_{n}^-$ such that 
$$\Psi_n(f_+)=\Psi_n(f_-)=(m_1,\ldots,m_n).$$
and then Proposition \ref{prop:extremalmoments}{\it(i)} shows that for any $f \in \mc{L}$ such that $\Psi_n(f)=(m_1,\ldots,m_n)$, we have 
\begin{equation} \label{eq:usethepropo}
\min\big\{m_{n+1}(f_-),m_{n+1}(f_+)\big\}\leq m_{n+1}(f) \leq \max\big\{m_{n+1}(f_-),m_{n+1}(f_+)\big\},
\end{equation} 
depending on the sequence $p_1,\ldots,p_{n+1}$. Consider the functions
\begin{equation} \label{eq:foundF-}
\widetilde{F}_-=\min\big\{ m_{n+1} \circ (\Psi_{n}|_{\mc{L}_n^-})^{-1}, m_{n+1} \circ (\Psi_{n}|_{\mc{L}_n^+})^{-1} \big\}
\end{equation}
and 
\begin{equation} \label{eq:foundF+}
\widetilde{F}_+ =\max\big\{ m_{n+1} \circ (\Psi_{n}|_{\mc{L}_n^-})^{-1}, m_{n+1} \circ (\Psi_{n}|_{\mc{L}_n^+})^{-1} \big\}.
\end{equation}
A combination of the induction hypothesis $\mc{B}_{n}=\mc{A}_n^\pm=\Psi_{n} (\mc{L}_{n}^\pm)$ with Proposition \ref{prop:technical}{\it(v)} yields the continuity of $\widetilde{F}_+$ and $\widetilde{F}_-$ on $\mc{B}_n$, which moreover is a set homeomorphic to $B_2^n$.
It also follows from the induction hypothesis that the boundary of $\mc{B}_{n}$ is $\Psi_{n}(\mc{L}_{n-1}^+) \cup \Psi_{n}(\mc{L}_{n-1}^-)$.
We would like to show that pointwise $\widetilde{F}_+ \geq \widetilde{F}_-$ with equality only on the boundary of $\mc{B}_n$.
Indeed, take a point $m \in \mc{B}_n$ and unique (by Lemma \ref{injectivity}) functions $f_\pm \in \mc{L}_n^\pm$ such that $m = \Psi_n(f_-) = \Psi_n(f_+)$. If $m \in \partial \mc{B}_n = \Psi_{n}(\mc{L}_{n-1}^+) \cup \Psi_{n}(\mc{L}_{n-1}^-)$, then $f_\pm \in \mc{L}_{n-1}^+ \cup \mc{L}_{n-1}^-$, so $f_- = f_+$ and $\widetilde{F}_-(m)=\widetilde{F}_+(m)$ by Proposition \ref{prop:extremalmoments}{\it(ii)}. If $m$ is not in $\partial \mc{B}_n$, then neither $f_-$ nor $f_+$ is in $\mc{L}_{n-1}^+ \cup \mc{L}_{n-1}^-$, so by Proposition \ref{prop:extremalmoments}{\it(iii)}, $\widetilde{F}_-(m) < \widetilde{F}_+(m)$.
Combining all the above with Lemma \ref{top1}, we finally infer that the set
\[
	\mc{\widetilde{C}}_{n+1} = \big\{(x,y) \in \mc{B}_n \times \R: \ \widetilde{F}_-(x) \leq y \leq \widetilde{F}_+(x)  \big\} 
\]
is homeomorphic to $B_2^{n+1}$ and that
\begin{equation} \label{eq:foundCn+1}
\partial \mc{\widetilde{C}}_{n+1} = \big\{(x,\widetilde{F}_-(x)): \ x\in\mc{B}_n\big\}\cup\big\{(x,\widetilde{F}_+(x)): \ x\in\mc{B}_n\big\} = \Psi_{n+1}(\mc{L}_{n}^+)\cup\Psi_{n+1}(\mc{L}_{n}^-).
\end{equation}
Moreover, using the notation of \eqref{eq:F+} and \eqref{eq:F-}, we can rewrite \eqref{eq:usethepropo} as $\widetilde{F}_{\pm}=F_{\pm}$, which in turn shows that  $\mc{\widetilde{C}}_{n+1} = \mc{C}_{n+1}$. Therefore, we deduce that
\begin{equation} \label{eq:conclusionCn+1}
\partial \mc{C}_{n+1} = \Psi_{n+1}(\mc{L}_{n}^+)\cup\Psi_{n+1}(\mc{L}_{n}^-).
\end{equation}
The proof will be complete once we show that $\mc{A}_{n+1}^\pm = \mc{C}_{n+1}$. To this end, consider the function $f_{n+1}^\pm:\mc{P}_{n+1}^\pm \to[0,\infty]^{n+1}$, given by $f_{n+1}^\pm = \Psi_{n+1}\circ e_{n+1}^\pm$. It is continuous by Proposition \ref{prop:technical}{\it (iii)} and satisfies 
\[
f_{n+1}^\pm(\mc{P}_{n+1}^\pm) = \Psi_{n+1}(e_{n+1}^\pm(\mc{P}_{n+1}^\pm)) = \Psi_{n+1}(\mc{L}_{n+1}^\pm) = \mc{A}_{n+1} \subseteq \mc{C}_{n+1}
\]
and by \eqref{eq:enboundary} and \eqref{eq:conclusionCn+1},
\[
f_{n+1}^\pm(\partial\mc{P}_{n+1}^\pm) = \Psi_{n+1}(e_{n+1}^\pm(\partial \mc{P}_{n+1}^\pm)) =\Psi_{n+1}(\mc{L}_n^+)\cup\Psi_{n+1}(\mc{L}_n^-) = \partial \mc{C}_{n+1}.
\]
Notice that $f_{n+1}^\pm$ is injective on $\mathrm{int}(\mc{P}_{n+1}^\pm)$, since $\Psi_{n+1}$ is injective on $\mc{L}_{n+1}^\pm$ (by Lemma \ref{injectivity}) and  $e_{n+1}^\pm$ is injective on $\mathrm{int}(\mc{P}_{n+1}^\pm)$. Therefore, since both $\mc{P}_{n+1}^\pm$ and $\mc{C}_{n+1}^\pm$ are homeomorphic to $B_2^{n+1}$, Lemma \ref{top2} gives that $\mc{A}_{n+1}^\pm=f_{n+1}^\pm(\mc{P}^\pm_{n+1})=\mc{C}_{n+1}$, thus completing the proof.
\end{proof}

\begin{remark}
The equality $\mc{B}_n = \mc{C}_n$ provides a structural property of the set $\mc{B}_n$. Namely, its intersection with every line of the form $\{y_0+t e_i, \ t \in \R\}$, where $(e_i)_{i=1}^n$ is the standard basis of $\R^n$, is either a line segment or the empty set.  
\end{remark}

\begin{proof}[Proof of Theorem \ref{moments-main}] 
The existence part of {\it (i)} follows from Theorem \ref{A=B=C}, whereas the uniqueness follows from Lemma \ref{injectivity}. Part {\it (ii)} is a consequence of Proposition \ref{prop:extremalmoments}{\it (i)} and Proposition \ref{prop:extremalmoments}{\it (iv)}. 
\end{proof}

\end{document}